\documentclass{ws-idaqp}
\usepackage{graphicx}
\usepackage[super]{cite}

\newcommand{\E}{{\mathbb{E}}}
\newcommand{\R}{{\mathbb{R}}}

\begin{document}
	
	\markboth{Qingsong Wang, A. A. Dorogovtsev}{Small deviations for integrators in terms of It\^{o}--Wiener expansions}
	
	
	\title{Small deviations for integrators in terms of It\^{o}--Wiener expansions}

\author{Qingsong Wang}
	
	\address{Jilin Police College, Changchun, 130012, P.~R. China.\\
1115365358@qq.com}

\author{A.~A. Dorogovtsev}

\address{Institute of Mathematics, National Academy of Sciences of Ukraine, Ukraine.\\
andrey.dorogovtsev@gmail.com}

\maketitle

\begin{abstract}
In this paper, we investigate small deviations for a class of Gaussian processes, known as integrators. In general, such processes are non-Markovian and may even fail to be semimartingales. First, we investigate small deviations for the multi-dimensional Wiener process, focusing on the asymptotic behavior of the Fourier--Wiener transform of the associated hitting event. We analyze the Fourier--Wiener transform and the separate summands in its It\^{o}--Wiener expansion. Then, we establish the asymptotic limits of the associated multiple integrals with heat kernels. Finally, we obtain the small deviation asymptotic result for the integrator $\eta$ associated with the continuous linear operator.

\end{abstract}

\keywords{Integrators; small deviations; Gaussian process; It\^{o}--Wiener expansions; Fourier--Wiener transform.}

\ccode{AMS Subject Classification: }

\section{Introduction}

The investigation of probability $\mathbb{P}(\|X\|<\varepsilon)$ as $\varepsilon\rightarrow0$, where $X$ is a stochastic process with a suitable norm $\|\cdot\|$, originated with the work of K. L. Chung \cite{Chu48} in 1948,  who established the first exact logarithmic asymptotics for Brownian motion under the uniform norm. In 1990s, M. A. Lifshits\cite{Lif98}, W. Linde\cite{Lin98} and W. V. Li\cite{Li99}, established some well-known asymptotic results not only for Brownian motion but also for some kind of Gaussian processes. In this paper, we study the small deviations for integrators. The integrator was first introduced by A. A. Dorogovtsev\cite{Dor06} for the purposes related to the theory of stochastic integration and anticipating stochastic differential equations. Integrators includes Brownian motion, Brownian bridge, Ornstein-Uhlenbeck process and other processes. We start from the small deviations for the multi-dimensional Wiener process, approaching it from the perspective of the asymptotic behavior of the Fourier--Wiener transform of the associated hitting event. Specifically, we analyze the behavior both of the Fourier--Wiener transform itself and of the individual summands in its series expansion. To this end, we first recall the parabolic boundary value problem satisfied by the Fourier--Wiener transform, and construct the transform via the Duhamel series representation associated with this problem. Furthermore, we establish the corresponding asymptotic limits for the associated multiple integrals with heat kernels. Finally, we obtained the small deviation asymptotic result for the integrator $\eta$ associated with the continuous linear operator $A$. The remainder of this paper is organized as follows. Section 2 studies the small deviations for the multi-dimensional Wiener process through the Fourier--Wiener transform, and Section 3 extends the analysis to integrators generated by continuous linear operators.

\section{Parabolic problems for the Fourier--Wiener transform of hitting events}
\subsection{Integrator's definition and examples}

\begin{definition}\cite{Dor06}
A centered Gaussian process $\{\eta(t);t\in[0,1]\}$ is called an integrator if there exists a positive constant $C>0$ such that for arbitrary $n\geq1$, partition $0=t_{0}<\cdots<t_{n}=1$ and real numbers $a_{0},\cdots,a_{n-1}$, the following relation holds,
\begin{equation*}
	\begin{aligned}
		\E\left(\sum_{k=0}^{n-1}a_{k}(\eta(t_{k+1})-\eta(t_{k}))\right)^{2}\leq C\sum_{k=0}^{n-1}a_{k}^{2}\Delta t_{k}^{2},
	\end{aligned}
\end{equation*}
where $\Delta t_{k}:=t_{k+1}-t_{k}$.
\end{definition}

This definition shows that for every function $f\in L^{2}([0,1])$, the integral
\begin{equation*}
\begin{aligned}
	\int_{0}^{1}f(t)d\eta(t)
\end{aligned}
\end{equation*}
is always well-defined, in the sense of the limit in the square mean of integrals. Furthermore, the following lemma gives a representation of the integrators in terms of Gaussian white noise on a Hilbert space.

\begin{lemma}\cite{Dor06}
	Let $\{\eta(t);t\in[0,1]\}$ be an integrator. Then there exists a Wiener process $\{w(t);t\in[0,1]\}$ and a continuous linear operator $A$ on the space $L^{2}([0,1])$ into itself such that
	\begin{equation*}
		\begin{aligned}
			\eta(t)=\int_{0}^{1}A(\mathbf{1}_{[0,t]})(s)dw(s).
		\end{aligned}
	\end{equation*}
\end{lemma}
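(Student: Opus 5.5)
The plan is to build, from the integrator inequality, a bounded linear "stochastic integral" map from $L^{2}([0,1])$ into the Gaussian space of $\eta$. Its covariance then defines a bounded positive operator, and we take $A$ to be the square root of that operator; the Wiener process $w$ is obtained by transporting the isometry back to $L^{2}([0,1])$. I read the right-hand side of the defining inequality as $C\sum_k a_k^2\Delta t_k$, i.e. $C\|f\|^2_{L^2}$ for the step function $f$, which is what makes $\int_0^1 f\,d\eta$ well defined for $f\in L^2([0,1])$ as the text asserts. I also use the normalisation $\eta(0)=0$: this is needed for the identity at $t=0$, since the right-hand side vanishes there, and otherwise one proves the statement for $\eta(t)-\eta(0)$.

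\textbf{Step 1 (the integral).} For a step function $f=\sum_{k}a_k\mathbf{1}_{[t_k,t_{k+1})}$ put $I(f)=\sum_k a_k(\eta(t_{k+1})-\eta(t_k))$. This does not depend on the chosen representation: any two representations have a common refinement, and on it the sums agree by telescoping. $I$ is linear, and the integrator inequality gives $\E I(f)^2\le C\|f\|^2$. Hence $I$ extends uniquely to a bounded linear map $I:L^2([0,1])\to H$, where $H\subset L^2(\Omega)$ is the closed Gaussian linear span of $\{\eta(t)\}$. In particular, $I(\mathbf{1}_{[0,t]})=\eta(t)$.

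\textbf{Step 2 (the operator $A$).} The form $B(f,g)=\E\, I(f)I(g)$ is symmetric, nonnegative and bounded by $C\|f\|\|g\|$. By the Riesz representation theorem there is a bounded self-adjoint $Q\ge0$ on $L^2([0,1])$ with $B(f,g)=(Qf,g)$. Set $A=Q^{1/2}$, a continuous linear operator on $L^2([0,1])$ with $\|Af\|^2=\E I(f)^2$.

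\textbf{Step 3 (the Wiener process).} Let $M=\overline{\mathrm{Ran}\,A}$. The assignment $Af\mapsto I(f)$ is well defined and isometric, because $\|Af\|=\|I(f)\|_{L^2(\Omega)}$; if $Af=Ag$ then $I(f)=I(g)$ a.s. It therefore extends to a linear isometry $V:M\to H$, whose values are jointly centered Gaussian. On $M^{\perp}=\ker A$, take an isonormal Gaussian process $\xi$ independent of $\eta$, enlarging the probability space by a product if necessary. Define
\begin{equation*}
W(h)=V(Ph)+\xi((I-P)h),
\end{equation*}
where $P$ is the orthogonal projection onto $M$. Then $W$ is an isonormal Gaussian process on $L^2([0,1])$, and $w(t)=W(\mathbf{1}_{[0,t]})$ has a continuous modification which is a Wiener process with $\int_0^1 h\,dw=W(h)$. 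Consequently
\begin{equation*}
\int_0^1 A(\mathbf{1}_{[0,t]})(s)\,dw(s)=W(A\mathbf{1}_{[0,t]})=V(A\mathbf{1}_{[0,t]})=I(\mathbf{1}_{[0,t]})=\eta(t).
\end{equation*}

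\textbf{Main obstacle.} The delicate point is Step 3. When $\ker A\neq\{0\}$, the Gaussian space of $\eta$ is too small to carry a full white noise, so the Wiener process generally exists only on an extension of the original probability space. One must also check carefully that $V$ is well defined on the closure of the range of $A$, and that combining $V$ with the independent noise $\xi$ still yields an isonormal process, i.e. that the mixed covariances vanish.
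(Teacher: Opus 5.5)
Your argument is correct and complete. The paper gives no proof of this lemma (it is only quoted from \cite{Dor06}), so there is no internal route to compare against. Your argument is the standard one: the integrator inequality makes $f\mapsto\int_0^1 f\,d\eta$ a bounded map $I\colon L^2([0,1])\to L^2(\Omega)$, its covariance form yields a bounded $Q\ge 0$, and $A=Q^{1/2}$, together with the isometry $Af\mapsto I(f)$ on $\overline{\mathrm{Ran}\,A}$ and an independent white noise on $\ker A$, produces the Wiener process.

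Your three caveats are all needed:
\begin{itemize}
\item With the literal right-hand side $C\sum_k a_k^2\Delta t_k^2$, split $[s,t]$ into $m$ equal pieces and take $a_k=1$ there. This gives $\E(\eta(t)-\eta(s))^2\le C(t-s)^2/m\to0$, so only your reading $C\sum_k a_k^2\Delta t_k$ is meaningful.
\item The identity at $t=0$ forces $\eta(0)=0$.
\item Enlarging the probability space cannot be avoided in general. Take $\eta(t)=t\xi$, which is an integrator with $C=1$, on the space generated by a single standard Gaussian $\xi$. Any representation would give $\xi=\int_0^1 g\,dw$ with $\|g\|=1$. Then $\int_0^1 g'\,dw$ for a unit $g'\perp g$ would be a nondegenerate Gaussian variable that is both independent of $\sigma(\xi)$ and measurable with respect to it, which is impossible.
\end{itemize}

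The positive square root is just one convenient choice. Any bounded $A$ with $A^*A=Q$ works by the same construction, for example $A=UI$ with $U$ an isometry of the Gaussian space of $\eta$ into $L^2([0,1])$.
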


Using the previous representation, we will give two examples of integrators.

\begin{example}
	Consider a simple case, where $A=I$, the integrator $\eta(t)$ is exactly the Wiener process.
\end{example}

\begin{example}
	Consider the Brownian bridge $B(t)$ defined as follows
	\begin{equation*}
		\begin{aligned}
			B(t)=w(t)-tw(1).
		\end{aligned}
	\end{equation*}
	We can divide $B(t)$ into two parts: the first part $w(t)$ is precisely the standard Brownian motion introduced in our previous example, while the second term admits the representation
	\begin{equation*}
		\begin{aligned}
			tw(1)=\int_{0}^{1}\int_{0}^{1}\mathbf{1}_{[0;t]}(y)dydw(s).
		\end{aligned}
	\end{equation*}
	We may then express the operator $A$ as
	\begin{equation*}
		\begin{aligned}
			Af(y)=If(y)-\int_{0}^{1}f(y)dy,
		\end{aligned}
	\end{equation*}
	for any $f \in L^{2}([0,1])$, where $I$ denotes the identity operator on $L^2([0,1])$.
\end{example}

\subsection{The Fourier--Wiener transform associated with It\^{o}--Wiener expansion} 
To define the Fourier--Wiener transform, we start with a d-dimensional standard Wiener process $\{w(t);t\in[0,1]\}$. For any $h\in L^2([0,1],\mathbb{R}^d)$, we introduce the stochastic exponential
\begin{equation*}
	\varepsilon(h)=\exp\left\{\int_{0}^{1}\langle h(t),dw(t)\rangle-\frac{1}{2}\int_{0}^{1}\|h(t)\|^2_{\mathbb{R}^d},dt\right\},
\end{equation*}
where $\langle\cdot,\cdot\rangle_{\mathbb{R}^d}$ and $\|\cdot\|_{\mathbb{R}^d}$ stand for the standard Euclidean inner product and norm on $\mathbb{R}^d$, respectively.
For a square-integrable Brownian functional $\alpha\in L^2(\Omega,\mathcal{F}_1,\mathbb{P})$, its Fourier--Wiener transform is defined as follows
\begin{equation*}
	\mathcal{T}(\alpha)(h)=\mathbb{E}\left[\alpha \varepsilon(h)\right].
\end{equation*}
If $\alpha$ admits the It\^{o}--Wiener expansion
\begin{equation*}\alpha= \mathbb{E}[\alpha] + \sum_{n=1}^{\infty}\int_{\Delta_n}a_{n}(t_1,\dots,t_n),dw(t_1)\cdots dw(t_n),
\end{equation*}
where $\Delta_n:=\{(t_1,\dots,t_n)\in[0,1]^n:0\leq t_1\leq\cdots\leq t_n\leq1\}$ is the n-dimensional standard simplex, and $a_n\in L^2(\Delta_n,(\mathbb{R}^d)^{\otimes n})$ is a deterministic symmetric kernel, then the Fourier--Wiener transform of $\alpha$ admits the representation
\begin{equation*}\mathcal{T}(\alpha)(h)=\mathbb{E}[\alpha] +\sum_{n=1}^{\infty}\int_{\Delta_n}\langle a_{n}(t_1,\dots,t_n),h(t_1)\otimes\cdots\otimes h(t_n)\rangle_n,dt_1\cdots dt_n.
\end{equation*}
Here $\otimes$ denotes the tensor product on $\mathbb{R}^d$, and $\langle\cdot,\cdot\rangle_n$ is the canonical inner product on $(\mathbb{R}^d)^{\otimes n}$ induced by $\langle\cdot,\cdot\rangle_{\mathbb{R}^d}$, defined for simple tensors by $\langle u_1\otimes\cdots\otimes u_n,v_1\otimes\cdots\otimes v_n\rangle_n=\prod_{i=1}^n\langle u_i,v_i\rangle_{\mathbb{R}^d}$.

\subsection{Hitting functional for Wiener process and it's Fourier--Wiener transform}
Next, consider the Fourier--Wiener transform of the hitting event of a domain boundary. For $x\in \mathcal{D}$ (where $\mathcal{D} \subset \mathbb{R}^{d}$ is a domain with smooth boundary) and $s\in[0,1]$, define the indicator random variable
\begin{equation*}
	W_{x,s} := 
	\begin{cases}
		1, & \text{if there exists } \tau\in[s,1] \text{ such that } x+w(\tau)-w(s)\in\partial \mathcal{D}, \\
		0, & \text{otherwise}.
	\end{cases}
\end{equation*}
The expectation $\mathbb{E}[W_{x,s}]$ is the probability that the Wiener process starting from $x$ at time $s$ hits the boundary $\partial \mathcal{D}$ by time $1$.

Then, for a fixed $h\in C^{1}([0,1],\R^{d})$, the function
\begin{equation*}
	\begin{aligned}
		U^{h}_{x,t}=\mathcal{T}(W_{x,t})(h)
	\end{aligned}
\end{equation*}
satisfies the boundary value problem \cite{Wan24}
\begin{equation*}
	\begin{aligned}
		\left\{
		\begin{array}{lcl}
			\frac{\partial}{\partial{t}}U^{h}_{x,t}=-\frac{1}{2}\triangle_{x}U^{h}_{x,t}-(h(t),\nabla_{x}U_{x,t}), \\
			U^{h}_{x,1}=0, \quad x\in \mathcal{D},\\
			U^{h}_{x,t}=1, \quad x\in \partial \mathcal{D}.\\
		\end{array}
		\right.
	\end{aligned}
\end{equation*}

The Duhamel series of $U^{h}_{x,t}$ is 
\begin{equation*}
	\begin{aligned}
		U^{h}_{x,t}=&U^{0}_{x,t}+\sum_{n=1}^{\infty}\int_{t\leq r_{1}\leq\cdots\leq r_{n}\leq1}\int_{\partial \mathcal{D}}\stackrel{n}{\cdots}\int_{\partial \mathcal{D}}G(t,r_{1},x,z_{1})\\
		&(\nabla_{z_{1}}G(r_{1},r_{2},z_{1},z_{2})\otimes\cdots\otimes\nabla_{z_{n}}U^{0}_{z_{n},r_{n}},h(r_{1})\otimes\cdots\otimes h(r_{n}))_{n}\\
		&dz_{1}\cdots dz_{n}dr_{1}\cdots dr_{n},
	\end{aligned}
\end{equation*}
where the zero-order term $U^{0}_{x,t}$ can be expressed as follows
\begin{equation*}
	\begin{aligned}
		U^{0}_{x,t}=\int_{t}^{1}\int_{\partial \mathcal{D}}G'(t,r,x,y)\delta(dy)dr,
	\end{aligned}
\end{equation*}
$G(s,t,x,y)$ denote the Green function \cite{Ric94} for the parabolic problem, $\otimes$ represents the tensor product, $\nabla_{z_{k}}$ denotes the gradient operator with respect to the variable $z_{k}$ ($k=1,2,\dots,n$).

\subsection{Asymptotic behavior of $\mathcal{T}(W^{\varepsilon}_{0,0})(h)$}
In this section, we study the asymptotic behavior of $\mathcal{T}(W^{\varepsilon}_{0,0})(h)$. To this end, we first recall that $\varepsilon(h)$ serves as a Radon--Nikodym density. Let $P$ denote the law of the standard Brownian motion $w(t)$, and let $Q$ denote the law of the drifted Brownian motion
\begin{equation*}
	\tilde{w}(t) = w(t) - \int_0^t h(s) \, ds.
\end{equation*}
Then $\varepsilon(h) = \frac{dQ}{dP}$, the Radon--Nikodym derivative of $Q$ with respect to $P$. In this setting, the asymptotic behavior of the hitting probability for the drifted Brownian motion coincides with that of the Fourier--Wiener transform of $W^{\varepsilon}_{0,0}$. We now state a preliminary result that will be used in the subsequent theorem.

\begin{lemma}\cite{Lif98}
	Let $\mu$ be a centered Gaussian measure on a separable Banach space $(E, \|\cdot\|)$, and let $H_{\mu}$ denote the reproducing kernel Hilbert space associated with $\mu$, with norm $\|\cdot\|_{H_{\mu}}$. Then, for any $h \in H_{\mu}$, the following limit holds
	\begin{equation*}
		\lim_{\varepsilon \to 0} \frac{\mu(B(h,\varepsilon))}{\mu(B(0,\varepsilon))} = \exp\left\{-\frac{1}{2} \|h\|_{H_\mu}^2\right\},
	\end{equation*}
	where $B(x,\varepsilon):=\{y \in E : \|y-x\| \leq \varepsilon\}$ denotes the closed ball of radius $\varepsilon$ centered at $x$.
\end{lemma}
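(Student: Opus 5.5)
The statement to prove is the cited lemma: for $h$ in the Cameron--Martin space $H_\mu$, the ratio $\mu(B(h,\varepsilon))/\mu(B(0,\varepsilon))$ tends to $\exp\{-\frac12\|h\|^2_{H_\mu}\}$. I will change measure by the Cameron--Martin formula and then show that the linear term in the density averages out on small balls.

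\textbf{Step 1: change of measure.} Write $\mu_h$ for the shift of $\mu$ by $h$. By the Cameron--Martin theorem,
\begin{equation*}
\frac{d\mu_h}{d\mu}(x)=\exp\left\{\hat h(x)-\tfrac12\|h\|_{H_\mu}^2\right\},
\end{equation*}
where $\hat h\in L^2(\mu)$ is the measurable linear functional associated with $h$ (the Paley--Wiener integral). The sign convention is irrelevant because the ball is symmetric. Hence
\begin{equation*}
\mu(B(h,\varepsilon))=e^{-\frac12\|h\|^2_{H_\mu}}\int_{B(0,\varepsilon)}e^{\hat h(x)}\,\mu(dx),
\end{equation*}
and it suffices to show that $\frac{1}{\mu(B(0,\varepsilon))}\int_{B(0,\varepsilon)}e^{\hat h}\,d\mu\to1$.

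\textbf{Step 2: the lower bound.} The ball $B(0,\varepsilon)$ is symmetric and $\mu$ is centered, so $\hat h$ and $-\hat h$ have the same law under $\mu$ restricted to the ball. Therefore
\begin{equation*}
\int_{B}e^{\hat h}\,d\mu=\int_B\cosh(\hat h)\,d\mu\ \ge\ \mu(B).
\end{equation*}
This gives $\liminf\ge1$; indeed the ratio is at least $e^{-\frac12\|h\|^2}$ for every $\varepsilon$, which is Anderson-type behaviour.

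\textbf{Step 3: the upper bound, which is the main obstacle.} Since $\hat h$ is only measurable and not continuous on $E$, its values on the ball need not be small. Approximate $h$ in $H_\mu$ by $h_n$ such that $\hat h_n=\ell_n$ is a continuous functional $\ell_n\in E^*$. This is possible because the image of $E^*$ is dense in $H_\mu$. Split $\hat h=\ell_n+r_n$ with $\|r_n\|_{L^2(\mu)}=\|h-h_n\|_{H_\mu}\to0$. On the ball, $|\ell_n|\le\|\ell_n\|\varepsilon$. Then apply H\"older's inequality with exponents $p,q$:
\begin{equation*}
\frac{1}{\mu(B)}\int_B e^{\hat h}\,d\mu\le e^{\|\ell_n\|\varepsilon}\left(\frac{1}{\mu(B)}\int_B e^{q r_n}\,d\mu\right)^{1/q}.
\end{equation*}
To control the conditional exponential moment of the Gaussian variable $r_n$ on the ball, uniformly in $\varepsilon$, I would use the Gaussian correlation inequality, or the weaker Khatri--\v{S}id\'ak / Anderson-type argument applied to the symmetric convex set $B$ and the slabs $\{|r_n|\le t\}$. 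This shows that conditioning on $B$ only makes $r_n$ more concentrated, so the conditional moment is at most the unconditional one, $\exp\{\frac{q^2}{2}\|h-h_n\|^2_{H_\mu}\}$. Letting $\varepsilon\to0$ and then $n\to\infty$ gives $\limsup\le1$. If I want to avoid the correlation inequality, the fallback is a disintegration argument: decompose $\mu$ along the direction $h-h_n$ and use Anderson's inequality on the sections.
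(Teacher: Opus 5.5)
Your proof is correct, modulo standard facts: the Cameron--Martin theorem, density of the image of $E^*$ in $H_\mu$, and $\mu(B(0,\varepsilon))>0$.

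The paper does not prove this lemma; it imports it from Lifshits, so there is no proof of the authors' to match. The closest argument they carry out is in the proof of Theorem~\ref{tm2.2}. There, for $h$ with continuous derivative, they integrate by parts so that $\int_0^1 h\,dw$ becomes a sup-norm continuous functional. The Cameron--Martin density is then uniformly close to $e^{-\frac12\|h\|^2}$ on the shrinking ball, and for general $h\in L^2$ they simply defer to a reference. That continuity argument is exactly the $\ell_n$-part of your Step~3. Your proof supplies what it lacks:
\begin{itemize}
\item The $\cosh$ symmetrization gives the lower bound for every $\varepsilon$ with no regularity assumption.
\item The splitting $\hat h=\ell_n+r_n$, combined with a one-slab correlation inequality, gives an $\varepsilon$-uniform bound on the remainder.
\end{itemize}
Together these cover every $h\in H_\mu$, including those where $\hat h$ is only measurable and can be large on the ball.

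A few points to make explicit when you write it up.

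First, to pass from ``$|r_n|$ is stochastically smaller on $B$'' to $\mu(B)^{-1}\int_B e^{qr_n}\,d\mu\le e^{q^2\|h-h_n\|_{H_\mu}^2/2}$, you must first replace $e^{qr_n}$ by $\cosh(qr_n)$. This uses the symmetry of $r_n$ restricted to $B$, exactly as in Step~2.

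Second, only the Khatri--\v{S}id\'ak one-slab case is needed, not Royen's inequality. Since $r_n$ is not in $E^*$, you must either approximate it in $L^2(\mu)$ by continuous functionals (the sets $\{|r_n|=t\}$ are null), or use your disintegration route. The disintegration is the cleanest option:
\begin{itemize}
\item Put $\sigma=\|h-h_n\|_{H_\mu}$, $e=(h-h_n)/\sigma$, and let $\mu'$ be the law of $x-\hat e(x)e$.
\item The components of $x=\hat e(x)e+(x-\hat e(x)e)$ are independent.
\item Anderson's inequality makes $s\mapsto\mu'(B-se)$ even and nonincreasing in $|s|$.
\item Chebyshev's association inequality against the function $\cosh(\sigma s)$, which is increasing in $|s|$, then gives the exponential bound directly.
\end{itemize}

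Finally, H\"older is unnecessary. Since $e^{\ell_n}\le e^{\|\ell_n\|\varepsilon}$ pointwise on $B$, you can take $q=1$.
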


Consider the process
\begin{equation*}
	M(t) := w(t) + \int_{0}^{t} h(s) \, ds, \quad t \in [0,1].
\end{equation*}
We define $W^{\varepsilon,h}_{x,t}$ accordingly. For $x \in \mathcal{D}$ and $s \in [0,1]$, introduce the stopping time
\begin{equation*}
	\tau_{x,s} := \inf\left\{ t \geq s : x + M(t) - M(s) \in \partial \mathcal{D} \right\},
\end{equation*}
and set
\begin{equation*}
	W^{\varepsilon,h}_{x,s} :=
	\begin{cases}
		1, & \tau_{x,s} \leq 1, \\
		0, & \tau_{x,s} > 1.
	\end{cases}
\end{equation*}

In the $d$-dimensional case, let $h=(h_{1}, \dots, h_{d}) \in L^2([0,1]; \R^{d})$, and let $w(t)= (w_{1}(t), \dots, w_{d}(t))$ be a $d$-dimensional standard Brownian motion, where $w_{1}(t), \dots, w_{d}(t)$ are independent one-dimensional standard Brownian motions. Combining this with the small deviation results for $d$-dimensional Brownian motion, we have the following result.

\begin{theorem}\label{tm2.2}
In the $d$-dimensional case, consider $W^{\varepsilon}_{x,s}$ on the ${B}(0,\varepsilon)\subset\R^{d}$, assume $h \in L^2([0,1])$, denote $|h|^2=\sum_{i=1}^{d} h_{i}^{2}$. Then
\begin{equation*}
	\begin{aligned}
		1-\mathcal{T}(W^{\varepsilon}_{0,0})(h) \sim \frac{j_{\nu,1}^{\nu-1}}{\Gamma(\nu+1)2^{\nu-1}J_{\nu+1}(j_{\nu,1})} e^{- \frac{1}{2} \int_0^1 |h(s)|^2 \, ds-j^{2}_{\nu,1}/2\varepsilon^{2}}, \quad \varepsilon\rightarrow 0,
	\end{aligned}
\end{equation*}
where $\Gamma( \cdot )$ is the Gamma function, $\nu=\frac{d}{2}-1$,
\begin{equation*}
	\begin{aligned}
		J_{\nu}(x)=\sum_{k=0}^{\infty}\frac{(-1)^{k}(\frac{x}{2})^{\nu+2k}}{k!\Gamma(\nu+k+1)},
	\end{aligned}
\end{equation*}
and $j_{\nu,1}$ is the first positive zero of $J_{\nu}(x)$.
\end{theorem}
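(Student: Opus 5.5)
We read $W^{\varepsilon}_{0,0}$ as the indicator that the Wiener path started at $0$ hits $\partial B(0,\varepsilon)$ before time $1$. Then $1-W^{\varepsilon}_{0,0}=\mathbf{1}_{\{\sup_{t\le 1}\|w(t)\|_{\R^d}<\varepsilon\}}$, and since $\mathbb{E}\,\varepsilon(h)=1$,
\begin{equation*}
1-\mathcal{T}(W^{\varepsilon}_{0,0})(h)=\mathbb{E}\left[\varepsilon(h)\mathbf{1}_{\{\|w\|_\infty<\varepsilon\}}\right].
\end{equation*}
The plan is to write this quantity, up to a factor tending to one, as $\mu(B(h_*,\varepsilon))$ for a shifted ball, where $\mu$ is Wiener measure on $C([0,1],\R^d)$ with sup norm. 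We then factor it as
\begin{equation*}
\frac{\mu(B(h_*,\varepsilon))}{\mu(B(0,\varepsilon))}\cdot \mu(B(0,\varepsilon)).
\end{equation*}
The first factor is handled by the Lemma of \cite{Lif98}, and the second by the exact small-ball law for $\|w\|$, a Bessel-process exit problem.

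\textbf{Step 1 (Girsanov).} By the Cameron--Martin theorem, with $H(t)=\int_0^t h(s)\,ds$,
\begin{equation*}
\mathbb{E}\left[\varepsilon(h)F(w)\right]=\mathbb{E}\left[F(w+H)\right].
\end{equation*}
Hence the quantity equals $\mathbb{P}(\|w+H\|_\infty<\varepsilon)=\mu(B(-H,\varepsilon))$; open versus closed balls are irrelevant since spheres are null. Since $H\in H_\mu$ with $\|H\|_{H_\mu}^2=\int_0^1|h(s)|^2ds$, the Lemma of \cite{Lif98} gives
\begin{equation*}
\mu(B(-H,\varepsilon))\sim e^{-\frac12\int_0^1|h|^2ds}\,\mathbb{P}(\|w\|_\infty<\varepsilon).
\end{equation*}
This step requires $h\in L^2$, which is exactly the assumption. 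The paper's $M$ carries the opposite sign of drift, but the norm is symmetric, so this does not matter.

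\textbf{Step 2 (exact small ball for $d$-dimensional BM).} By Brownian scaling, $\mathbb{P}(\|w\|_\infty<\varepsilon)=\mathbb{P}(\tau>\varepsilon^{-2})$, where $\tau$ is the exit time of $w$ from the unit ball. The function $u(t,x)=\mathbb{P}_x(\tau>t)$ solves the heat equation $\partial_t u=\frac12\Delta u$ in $B(0,1)$ with Dirichlet data and $u(0,\cdot)=1$. We expand $u$ in Dirichlet eigenfunctions of $-\frac12\Delta$. Only radial modes contribute at $x=0$, and these are $\phi_k(r)=r^{-\nu}J_\nu(j_{\nu,k}r)$ with eigenvalues $j_{\nu,k}^2/2$. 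Hence
\begin{equation*}
u(t,0)=\sum_k c_k e^{-j_{\nu,k}^2t/2},\qquad c_k=\frac{\phi_k(0)\int_0^1\phi_k(r)r^{d-1}dr}{\int_0^1\phi_k(r)^2r^{d-1}dr},
\end{equation*}
and therefore $u(t,0)\sim c_1e^{-j_{\nu,1}^2t/2}$.

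To evaluate $c_1$ we use three identities. First, from the series definition of $J_\nu$, $\phi_1(0)=(j/2)^\nu/\Gamma(\nu+1)$ with $j=j_{\nu,1}$. Second, since $d-1=2\nu+1$,
\begin{equation*}
\int_0^1 r^{\nu+1}J_\nu(jr)\,dr=J_{\nu+1}(j)/j,
\end{equation*}
which follows from $(x^{\nu+1}J_{\nu+1})'=x^{\nu+1}J_\nu$. Third,
\begin{equation*}
\int_0^1 rJ_\nu(jr)^2\,dr=\tfrac12 J_{\nu+1}(j)^2 .
\end{equation*}
Combining these gives
\begin{equation*}
c_1=\frac{2(j/2)^\nu}{\Gamma(\nu+1)\,j\,J_{\nu+1}(j)}=\frac{j^{\nu-1}}{\Gamma(\nu+1)2^{\nu-1}J_{\nu+1}(j)},
\end{equation*}
which matches the stated constant. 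Inserting $t=\varepsilon^{-2}$ and combining with Step 1 yields the theorem.

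\textbf{Main obstacle.} The delicate point is Step 2. We must justify the termwise eigenfunction expansion evaluated at the single point $x=0$, and show that the tail $\sum_{k\ge2}$ is $o(e^{-j_{\nu,1}^2t/2})$ as $t\to\infty$. This needs bounds on $|c_k|$, which grow at most polynomially in $j_{\nu,k}$ by the asymptotics $j_{\nu,k}\sim \pi k$ and $|J_{\nu+1}(j_{\nu,k})|\asymp k^{-1/2}$. With such bounds the tail is dominated by $e^{-j_{\nu,2}^2t/2}$ times a convergent sum for $t\ge1$. An alternative route is to quote the known Ciesielski--Taylor formula for the exit time of Brownian motion from a ball and simply verify that the constants agree. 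Step 1 is routine once Cameron--Martin and the Lemma of \cite{Lif98} are invoked.
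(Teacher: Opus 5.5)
Your proposal is correct and follows essentially the paper's route: use the density $\varepsilon(h)$ to turn $1-\mathcal{T}(W^{\varepsilon}_{0,0})(h)$ into a shifted small-ball probability, extract the factor $e^{-\frac12\int_0^1|h(s)|^2ds}$ from the Cameron--Martin small-ball ratio, and multiply by the centered small-ball asymptotics of $d$-dimensional Brownian motion. The paper differs in two ways: it first treats $h\in C^1$ by continuity of the exponential on $B(0,\varepsilon)$ before citing the general ratio formula, and it simply quotes the constant $\frac{j_{\nu,1}^{\nu-1}}{\Gamma(\nu+1)2^{\nu-1}J_{\nu+1}(j_{\nu,1})}$, whereas you derive it from the radial Dirichlet eigenfunction expansion (your computation of $c_1$ and your tail bound are correct, and they reduce to $\frac{4}{\pi}$ and $\frac{\pi^2}{8}$ when $d=1$).
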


\begin{proof}
The stochastic exponent $\varepsilon(h)$ is a Radon--Nikodym density and
\begin{equation*}
	\begin{aligned}
		\mathcal{T}(W^{\varepsilon}_{0,0})(h)=\E [W^{\varepsilon}_{0,0}\varepsilon(h)]=\E [W^{\varepsilon,h}_{0,0}].
	\end{aligned}
\end{equation*}
If $h'$ exists and is continuous, using integration by parts we obtain
\begin{equation*}
	\int_0^1 h(s) \, dw(s) = -\int_0^1 h'(s) w(s) \, ds + h(1) w(1).
\end{equation*}
This representation is continuous with respect to $w$ in the uniform norm. So, the hitting probability for the process $w(t)+\int_{0}^{t}h(s)ds$ can be expressed as follows
\begin{equation*}
	\begin{aligned}
		1-\mathcal{T}(W^{\varepsilon}_{0,0})(h)=&\int_{B(0,\varepsilon)}e^{-\int_0^1 h'(s)wds+h(1)w(1) - \frac{1}{2} \int_0^1 |h|^2(s) \, ds}\mu(dw)\\
		=&\frac{\int_{B(0,\varepsilon)}e^{-\int_0^1 h'(s)wds+h(1)w(1) - \frac{1}{2} \int_0^1 |h|^2(s) \, ds}\mu(dw)}{\mu(B(0,\varepsilon))}\mu(B(0,\varepsilon)),
	\end{aligned}
\end{equation*}
where $\mu$ is the Wiener measure. Since the exponential term is continuous with respect to $w$ in the uniform norm, we have
\begin{equation*}
	\begin{aligned}
	&\lim_{\varepsilon \to 0} \frac{\int_{B(0,\varepsilon)} \exp\left\{ -\int_0^1 h'(s) w(s) \, ds + h(1) w(1) - \frac{1}{2} \int_0^1 |h(s)|^2 \, ds \right\} \mu(dw)}{\mu(B(0,\varepsilon))}\\
	=& \exp\left\{ -\frac{1}{2} \int_0^1 |h(s)|^2 \, ds \right\}.
	\end{aligned}
\end{equation*}
Combining with the small deviation for Brownian motion, we can get the asymptotic result when $\varepsilon\rightarrow 0$
\begin{equation*}
	\begin{aligned}
		1-\mathcal{T}(W^{\varepsilon}_{0,0})(h)
		\sim \frac{j_{\nu,1}^{\nu-1}}{\Gamma(\nu+1)2^{\nu-1}J_{\nu+1}(j_{\nu,1})} e^{- \frac{1}{2} \int_0^1 |h(s)|^2 \, ds-j^{2}_{\nu,1}/2\varepsilon^{2}}.
	\end{aligned}
\end{equation*}
Consider the Cameron--Martin space
\begin{equation*}
	\begin{aligned}
		H_\mu = \left\{ f: f(0)=0,\ f \text{ is absolutely continuous},\ \int_0^1 |f'(s)|^2 ds < \infty \right\},
	\end{aligned}
\end{equation*}
equipped with the norm $\|f\|_{H_\mu} = \left( \int_0^1 |f'(s)|^2 ds \right)^{1/2}$. The drift function $\alpha(t) = \int_0^t h(s) ds$ belongs to $H_\mu$ if and only if $h \in L^2([0,1])$, and in this case $\|\alpha\|_{H_\mu}^2 = \int_0^1 |h|^2(s) ds$. By the asymptotic formula for the small ball probability \cite{Vla15} ratio of infinite-dimensional Gaussian measures, we obtain
\begin{equation*}
	\begin{aligned}
		\lim_{\varepsilon \to 0} \frac{\int_{B(0,\varepsilon)}{\varepsilon(h)}\mu(dw)}{\mu(B(0,\varepsilon))} = e^{-\frac{1}{2} \|\alpha\|_{H_\mu}^2} = e^{-\frac{1}{2} \int_0^1 |h|^2(s) \, ds}.
	\end{aligned}
\end{equation*}

\end{proof}

\subsection{Limits for separate summands of the Fourier--Wiener transform}
Based on the It\^{o}--Wiener expansion of $W^{\varepsilon}_{0,0}$, we may represent its Fourier--Wiener transform  via the series expansion
\begin{equation*}
	\begin{aligned}
		\mathcal{T}(W^{\varepsilon}_{0,0})(h) = \sum_{n=0}^{\infty} \mathcal{T}^{\varepsilon}_n(h).
	\end{aligned}
\end{equation*}
We can establish the following result.

\begin{theorem}\label{tm2.3}
	In the $d$-dimensional case, let $h \in L^2([0,1]; \mathbb{R}^d)$ and denote $\|h\|^2 = \int_0^1 |h(t)|^2 \, dt$. For every nonnegative integer $k$, we have
	\begin{equation*}
		\lim_{\varepsilon\to0} \mathcal{T}^{\varepsilon}_{2k}(h) \, e^{j_{\nu,1}^2/(2\varepsilon^2)}
		= \frac{j_{\nu,1}^{\nu-1}}{\Gamma(\nu+1)2^{\nu-1}J_{\nu+1}(j_{\nu,1})} \, \frac{(-1)^{k+1}}{2^k k!} \|h\|^{2k},
	\end{equation*}
	and
	\begin{equation*}
		\mathcal{T}^{\varepsilon}_{2k+1}(h) = 0.
	\end{equation*}
\end{theorem}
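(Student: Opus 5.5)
The plan is to exploit the homogeneity of the chaos summands in $h$ and then read off the statement from Theorem \ref{tm2.2} by extracting Taylor coefficients. Since $\mathcal{T}^{\varepsilon}_n(h)=\int_{\Delta_n}\langle a^\varepsilon_n,h^{\otimes n}\rangle_n\,dt_1\cdots dt_n$ is an $n$-homogeneous form in $h$, for every $\lambda$ we have
\begin{equation*}
\mathcal{T}(W^{\varepsilon}_{0,0})(\lambda h)=\sum_{n=0}^{\infty}\lambda^{n}\mathcal{T}^{\varepsilon}_n(h).
\end{equation*}

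\emph{Odd terms.} The ball $B(0,\varepsilon)$ is symmetric and $w\mapsto -w$ preserves the Wiener measure. Hence $W^{\varepsilon}_{0,0}(-w)=W^{\varepsilon}_{0,0}(w)$. The $n$-th chaos changes sign under this map for odd $n$, so all odd kernels vanish and $\mathcal{T}^{\varepsilon}_{2k+1}(h)=0$. Equivalently, $\mathcal{T}(W^\varepsilon_{0,0})(\lambda h)=\mathcal{T}(W^\varepsilon_{0,0})(-\lambda h)$.

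\emph{Even terms.} Define
\begin{equation*}
F_\varepsilon(\lambda):=e^{j_{\nu,1}^2/(2\varepsilon^2)}\bigl(1-\mathcal{T}(W^{\varepsilon}_{0,0})(\lambda h)\bigr)=e^{j_{\nu,1}^2/(2\varepsilon^2)}\,\E\Bigl[\mathbf{1}_{\{\|w\|_\infty<\varepsilon\}}\exp\Bigl\{\lambda\int_0^1\langle h,dw\rangle-\tfrac{\lambda^2}{2}\|h\|^2\Bigr\}\Bigr],
\end{equation*}
for complex $\lambda$. The Taylor coefficient of $\lambda^{2k}$ in $F_\varepsilon$ is $-e^{j_{\nu,1}^2/(2\varepsilon^2)}\mathcal{T}^\varepsilon_{2k}(h)$ for $k\ge1$. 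For $k=0$ it is $e^{j_{\nu,1}^2/(2\varepsilon^2)}(1-\mathcal{T}^\varepsilon_0)$, so the constant term should be read as $1-\mathcal{T}^\varepsilon_0=\mu(B(0,\varepsilon))$; that is exactly the small-ball asymptotics, consistent with the sign $(-1)^{k+1}$. By Theorem \ref{tm2.2} applied to $\lambda h$ with real $\lambda$,
\begin{equation*}
F_\varepsilon(\lambda)\to C_\nu e^{-\lambda^2\|h\|^2/2},\qquad C_\nu=\frac{j_{\nu,1}^{\nu-1}}{\Gamma(\nu+1)2^{\nu-1}J_{\nu+1}(j_{\nu,1})}.
\end{equation*}
The coefficient of $\lambda^{2k}$ in the limit is $C_\nu(-1)^k\|h\|^{2k}/(2^kk!)$. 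Flipping the sign gives the claimed formula.

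\emph{Main obstacle.} The real work is justifying the passage from pointwise convergence of $F_\varepsilon$ to convergence of its Taylor coefficients.
\begin{itemize}
\item Each $F_\varepsilon$ is entire: the integrand is dominated by $e^{|\lambda||X|}$ with $X=\int_0^1\langle h,dw\rangle$ Gaussian, and Morera/Fubini applies.
\item For $|\lambda|\le R$, $|\lambda|=\rho$ and $a=\mathrm{Re}\,\lambda$, we have $|F_\varepsilon(\lambda)|\le e^{|\lambda|^2\|h\|^2}\, e^{j_{\nu,1}^2/(2\varepsilon^2)}\,\E[\mathbf{1}_{B(0,\varepsilon)}(e^{aX}+e^{-aX})]$. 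Since $a\in[-\rho,\rho]$ and $e^{aX}\le e^{\rho X}+e^{-\rho X}$, this reduces to the real quantities $e^{j_{\nu,1}^2/(2\varepsilon^2)}\E[\mathbf{1}_{B(0,\varepsilon)}e^{\pm\rho X}]$ at the two endpoints $\lambda=\pm\rho$. These equal $F_\varepsilon(\pm\rho)e^{\rho^2\|h\|^2/2}$, which converge by Theorem \ref{tm2.2} and are therefore bounded for small $\varepsilon$.
\item This bound gives local uniform boundedness of $\{F_\varepsilon\}$ on $\{|\lambda|\le R\}$.
\end{itemize}
Vitali's theorem then upgrades the convergence on $\mathbb{R}$ to locally uniform convergence on $\mathbb{C}$, and Cauchy's integral formula on $|\lambda|=1$ gives convergence of every Taylor coefficient. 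If $h$ is merely in $L^2$ rather than $C^1$, I would first check that Theorem \ref{tm2.2} holds in that generality; it does via the Cameron--Martin argument at the end of its proof. This finishes the proof.
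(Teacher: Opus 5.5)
Your argument is correct, but it takes a different route from the paper.

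\textbf{The paper's route.} The paper works chaos by chaos. It writes $I_n=\|h\|^nH_n(Z)$ with $Z=X/\|h\|$, and uses $\E[H_n(Z)]=0$ for $n\ge 1$ to move the expectation onto $\{\tau_\varepsilon>1\}$. It then integrates by parts to get $|X|\le C\varepsilon$ on that event, and Taylor-expands $H_{2k}$ at $0$. This gives $\E[\mathbf{1}_{\{\tau_\varepsilon>1\}}H_{2k}(Z)]=(-1)^k(2k-1)!!\,\mathbb{P}(\tau_\varepsilon>1)(1+O(\varepsilon^2))$, and $(2k-1)!!/(2k)!=1/(2^kk!)$ then produces the constant. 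That proof needs only the centered small-ball asymptotics and even yields a relative error $O(\varepsilon^2)$. However, the bound $|X|\le C\varepsilon$ requires $h$ of bounded variation ($C^1$ in the paper), not merely $h\in L^2$ as the statement claims.

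\textbf{Your route.} You deduce the chaos asymptotics from Theorem~\ref{tm2.2} via analyticity in $\lambda$ and Vitali's theorem. This is essentially the mechanism the paper itself uses later, for $\eta$, in the last theorem of Section~3. There the required local uniformity in the complex parameter is only asserted, whereas you actually supply it. Your argument covers all $h\in L^2$, given Theorem~\ref{tm2.2} in that generality via the Cameron--Martin shifted-ball lemma, as you note. The price is that it depends on Theorem~\ref{tm2.2} and gives no rate.

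\textbf{One repair.} As written, your bound on $\{|\lambda|\le R\}$ uses the endpoints $\pm\rho$ with $\rho=|\lambda|$. Pointwise convergence of $F_\varepsilon(\pm\rho)$ for each fixed $\rho$ does not by itself give a bound uniform in $\rho\in[0,R]$. Instead use
\[
e^{aX}\le 2\cosh(aX)\le 2\cosh(RX)=e^{RX}+e^{-RX}\quad\text{for all } |a|\le R,
\]
so that only the two quantities $F_\varepsilon(\pm R)$ are needed. With that change, local uniform boundedness, Vitali, and Cauchy's formula go through as you describe.

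\textbf{The case $k=0$.} Your remark is right. Read literally, $\mathcal{T}^\varepsilon_0e^{j_{\nu,1}^2/(2\varepsilon^2)}\to+\infty$, so the $k=0$ case must be understood as a statement about $1-\mathcal{T}^\varepsilon_0$. The paper's proof likewise only handles $n\ge 1$, since it relies on $\E[H_n(Z)]=0$.
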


\begin{proof}
Denote
\begin{equation*}
	\begin{aligned}
		I_{n}=\int_{0}^{1} \cdots \int_{0}^{1}h(t_{1})\cdots h(t_{n})dw(t_{1}) \cdots dw(t_{n}).
	\end{aligned}
\end{equation*}
Consider the process
\begin{equation*}
	\begin{aligned}
		M_{1}= \exp \left( \int_{0}^{1}h(s)dw(s) - \frac{1}{2} \int_{0}^{1}{h(s)}^{2} ds \right),
	\end{aligned}
\end{equation*}
which has the It\^{o}--Wiener expansion
\begin{equation*}
	\begin{aligned}
		M_{1}= \sum_{n=0}^{\infty}\frac{I_{n}}{n!}.
	\end{aligned}
\end{equation*}
Then we have
\begin{equation*}
	M_1 = \exp\left( X - \frac{1}{2}\|h\|^2 \right), \quad \text{where } X = \int_0^1 h(s) \, dw(s).
\end{equation*}
Note that $X \sim N(0, \|h\|^2)$. Introduce the standardized variable $Z = X / \|h\|$, which is standard normal. The generating function for the Hermite polynomials is
\begin{equation*}
	\sum_{n=0}^{\infty} \frac{H_n(Z)}{n!} \|h\|^n = \exp\left( \|h\| Z - \frac{1}{2}\|h\|^2 \right) = \exp\left( X - \frac{1}{2}\|h\|^2 \right).
\end{equation*}
Comparing the two expansions of $M_1$, we obtain for every $n$,
\begin{equation}\label{Hermit1}
	\begin{aligned}
		I_n = \|h\|^n H_n(Z).
	\end{aligned}
\end{equation}
Denote $W^{\varepsilon}_{0,0} = 1_{\{\tau_\varepsilon \leq 1\}}$, where $\tau_\varepsilon = \inf\{t>0: |w(t)|=\varepsilon\}$. We aim to find the asymptotics of $\mathbb{E}[W^{\varepsilon}_{0,0} I_n]$ as $\varepsilon \to 0$. Since $\mathbb{E}[H_n(Z)] = 0$ for $n \geq 1$, we have
\begin{equation*}
	\begin{aligned}
		\mathbb{E}[1_{\{\tau_\varepsilon \leq 1\}} H_n(Z)] + \mathbb{E}[1_{\{\tau_\varepsilon > 1\}} H_n(Z)] = 0,
	\end{aligned}
\end{equation*}
so that
\begin{equation*}
	\begin{aligned}
		\mathbb{E}[W^{\varepsilon}_{0,0} I_n] = \|h\|^{n} \mathbb{E}[1_{\{\tau_\varepsilon \leq 1\}} H_n(Z)] = -\|h\|^{n} \mathbb{E}[1_{\{\tau_\varepsilon > 1\}} H_n(Z)].
	\end{aligned}
\end{equation*}
On the event $\{\tau_\varepsilon > 1\}$, the Brownian motion satisfies $|w(t)| < \varepsilon$ for all $t \in [0,1]$. Using integration by parts, we have
\begin{equation*}
	X = \int_0^1 h(t) \, dw(t) = h(1) w(1) - \int_0^1 w(t) h'(t) \, dt,
\end{equation*}
and it follows that there exists a constant $C$ (depending on $h$) such that $|X| \leq C \varepsilon$ uniformly on the event $\{\tau_\varepsilon > 1\}$. Hence $|Z| = |X|/{\|h\|} \leq (C/{\|h\|}) \varepsilon$, i.e., $Z = O(\varepsilon)$ on this event. Expand $H_n(Z)$ around $Z=0$. The Hermite polynomials satisfy $H_n(0) = 0$ when $n=2k+1$ for some integer $k\geq0$, and for $n=2k$ with $k\geq0$,
\begin{equation*}
	H_{2k}(0) = (-1)^k (2k-1)!!, \quad (2k-1)!! = 1 \cdot 3 \cdots (2k-1).
\end{equation*}
For $n=2k+1$, since $H_n(0)=0$ and $H_n$ is an odd function, we have $H_n(Z) = O(\varepsilon)$ uniformly on $\{\tau_\varepsilon > 1\}$, so that
\begin{equation*}
	\mathbb{E}\left[1_{\{\tau_\varepsilon > 1\}} H_n(Z)\right] = O\left(\varepsilon \mathbb{P}(\tau_\varepsilon > 1)\right) = o\left(\mathbb{P}(\tau_\varepsilon > 1)\right).
\end{equation*}
In fact, by symmetry, the expectation is exactly zero. For $n=2k$, since $H_{2k}$ is an even function and the next term in its Taylor expansion is of order $Z^2$, we have
\begin{equation*}
	H_{2k}(Z) = (-1)^k (2k-1)!! + O(\varepsilon^2)
\end{equation*}
uniformly on $\{\tau_\varepsilon > 1\}$. It then follows that
\begin{equation*}
	\mathbb{E}\left[1_{\{\tau_\varepsilon > 1\}} H_{2k}(Z)\right] = (-1)^k (2k-1)!! \, \mathbb{P}(\tau_\varepsilon > 1) + O(\varepsilon^2)\mathbb{P}(\tau_\varepsilon > 1).
\end{equation*}
Since $O(\varepsilon^2)\mathbb{P}(\tau_\varepsilon > 1)=o(\mathbb{P}(\tau_\varepsilon > 1))$, it follows that
\begin{equation*}
	\begin{aligned}
		\mathbb{E}[1_{\{\tau_\varepsilon > 1\}} H_{2k}(Z)] = (-1)^k (2k-1)!! \, \mathbb{P}(\tau_\varepsilon > 1) + o(\mathbb{P}(\tau_\varepsilon > 1)).
	\end{aligned}
\end{equation*}
Substituting into the expression for $\mathbb{E}[W^{\varepsilon}_{0,0} I_n]$ yields
\begin{equation*}
	\begin{aligned}
		\mathbb{E}[W^{\varepsilon}_{0,0} I_{2k}] =& -\|h\|^{2k} \mathbb{E}[1_{\{\tau_\varepsilon > 1\}} H_{2k}(Z)]\\
		=& (-1)^{k+1} (2k-1)!! \, \|h\|^{2k} \, \mathbb{P}(\tau_\varepsilon > 1) + o\big(\|h\|^{2k} \mathbb{P}(\tau_\varepsilon > 1)\big).
	\end{aligned}
\end{equation*}
Recall the small deviation asymptotics for Brownian motion
\begin{equation*}
	\begin{aligned}
		\mathbb{P}(\tau_\varepsilon > 1) \sim \frac{j_{\nu,1}^{\nu-1}}{\Gamma(\nu+1)2^{\nu-1}J_{\nu+1}(j_{\nu,1})}e^{-j^{2}_{\nu,1}/2\varepsilon^{2}}, \quad \varepsilon \to 0.
	\end{aligned}
\end{equation*}
Hence
\begin{equation*}
	\begin{aligned}
		\mathbb{E}[W^{\varepsilon}_{0,0} I_{2k}] \sim (-1)^{k+1} (2k-1)!! \, \|h\|^{2k} \cdot \frac{j_{\nu,1}^{\nu-1}}{\Gamma(\nu+1)2^{\nu-1}J_{\nu+1}(j_{\nu,1})}e^{-j^{2}_{\nu,1}/2\varepsilon^{2}} \quad \varepsilon \to 0.
	\end{aligned}
\end{equation*}
For $n=2k+1$, the result is exactly zero.
Thus, for general $\|h\|$, we obtain
\begin{equation*}
	\begin{aligned}
		\mathbb{E}[W^{\varepsilon}_{0,0} I_n] \sim
		\begin{cases}
			0, & n=2k+1, \\
			(-1)^{k+1} (2k-1)!! \, \|h\|^{2k} \, \frac{j_{\nu,1}^{\nu-1}}{\Gamma(\nu+1)2^{\nu-1}J_{\nu+1}(j_{\nu,1})}e^{-j^{2}_{\nu,1}/2\varepsilon^{2}}, & n = 2k,
		\end{cases}
		 \quad \varepsilon \to 0.
	\end{aligned}
\end{equation*}
By the expansion, one can easily get
\begin{equation*}
	\begin{aligned}
		\mathcal{T}^{\varepsilon}_n(h) = \frac{1}{n!}\mathbb{E}[W^{\varepsilon}_{0,0} I_n].
	\end{aligned}
\end{equation*}
By the relation
\begin{equation*}
	\begin{aligned}
		\frac{(2k-1)!!}{(2k)!}=\frac{1}{2^k k!}.
	\end{aligned}
\end{equation*}
Thus, for every nonnegative integer \(k\)
\begin{equation*}
	\begin{aligned}
		\lim_{\varepsilon\to0} \mathcal{T}^{\varepsilon}_{2k}(h)\, e^{j^{2}_{\nu,1}/2\varepsilon^{2}}
		= \frac{j_{\nu,1}^{\nu-1}}{\Gamma(\nu+1)2^{\nu-1}J_{\nu+1}(j_{\nu,1})}\, \frac{(-1)^{k+1}}{2^k k!} \|h\|^{2k},
	\end{aligned}
\end{equation*}
and
\begin{equation*}
	\begin{aligned}
		\mathcal{T}^{\varepsilon}_{2k+1}(h)= 0.
	\end{aligned}
\end{equation*}

\end{proof}

By the known result, in the one-dimensional case
\begin{equation}\label{eq11.11}
\begin{aligned}
	\mathcal{T}(W^{\varepsilon}_{0,0})(h)=&a^{\varepsilon}_{0,0}+\sum_{n=1}^{\infty}\int_{0\leq t_{1}\leq\cdots\leq t_{n}\leq1}a^{\varepsilon}_{n}(t_{1},\cdots,t_{n})h(t_{1})\cdots h(t_{n})dt_{1}\cdots dt_{n}\\
	=&a^{\varepsilon}_{0,0}+\sum_{n=1}^{\infty}\int_{0\leq t_{1}\leq\cdots\leq t_{n}\leq1}\int_{\partial B(0,\varepsilon)}\stackrel{n}{\cdots}\int_{\partial B(0,\varepsilon)}G_{\varepsilon}(0,t_{1},0,z_{1})\\
	&\nabla_{z_{1}}G_{\varepsilon}(t_{1},t_{2},z_{1},z_{2}) \cdots \nabla_{z_{n}}a^{\varepsilon}_{z_{n},t_{n}} h(t_{1})\cdots h(t_{n})dz_{1}\cdots dz_{n}dt_{1}\cdots dt_{n},
\end{aligned}
\end{equation}
where
\begin{equation*}
\begin{aligned}
	a^{\varepsilon}_{x,s}=\int_{0}^{1}\int_{\partial B(0,\varepsilon)}G_{\varepsilon}'(s,t,x,y)\delta(dy)dt.
\end{aligned}
\end{equation*}
This $a^{\varepsilon}_{0,0}$ is exactly the hitting probability $P\{\sup_{s\in[0,1]}\|w(s)\|=\varepsilon\}$. From this, we can also derive the asymptotic results for the multiple Green functions. Before presenting the next theorem, we first give a construction of the surface delta function, it satisfies
\begin{equation}\label{delta}
	\begin{aligned}
		k! \int_{0\leq s_1\leq\cdots\leq s_{2k}\leq1} \delta_k(t_1,\dots,t_{2k}) h(t_1)\cdots h(t_{2k}) dt_1\cdots dt_{2k} = \left( \int_0^1 h^2(t) dt \right)^k,
	\end{aligned}
\end{equation} 
where $\delta_k$ is defined by
\begin{equation*}
	\begin{aligned}
		\delta_k(t_1,\dots,t_{2k}) =  \prod_{i=1}^k \delta(t_{2i-1} - t_{2i}).
	\end{aligned}
\end{equation*}

\begin{theorem}\label{tm4.2}
In the one-dimensional case, let $\mathcal{B}(0,\varepsilon) = [-\varepsilon, \varepsilon]$ denote the close ball of radius $\varepsilon$ centered at $0$, with boundary $\partial\mathcal{B}(0,\varepsilon) = \{-\varepsilon, \varepsilon\}$.
For any $n\geq1$, we have
\begin{equation*}
	\begin{aligned}
		a^{\varepsilon}_n(t_1,\dots,t_n)=\int_{\partial\mathcal{B}(0,\varepsilon)^n} G_{\varepsilon}(0,t_1,0,z_1) \nabla_{z_{1}}G_{\varepsilon}(t_{1},t_{2},z_{1},z_{2}) \cdots \nabla_{z_{n}}a^{\varepsilon}_{z_{n},t_{n}} dz_1\cdots dz_n,
	\end{aligned}
\end{equation*}
Then the following asymptotic results hold as $\varepsilon\to0$. For any $n=2k+1$ with $k\geq0$,
\begin{equation*}
	\begin{aligned}
		a^{\varepsilon}_{2k+1}(t_1,\dots,t_{2k+1}) = 0.
	\end{aligned}
\end{equation*}
For any $n=2k$ with $k\geq0$, as $\varepsilon\to0$,
\begin{equation*}
	\begin{aligned}
		a^{\varepsilon}_{2k}(t_1,\dots,t_{2k}) \sim \frac{4}{\pi}e^{\frac{\pi^2}{8\varepsilon^{2}}} \cdot \frac{(-1)^{k+1}}{2^k } \prod_{i=1}^k \delta(t_{2i-1} - t_{2i}), 
	\end{aligned}
\end{equation*}
where $0\leq t_{1}\leq\cdots\leq t_{2k}\leq1$ and $\delta(\cdot)$ is the one-dimensional surface delta function. This asymptotic equivalence holds in the sense of weak convergence of multilinear forms in $\mathcal{D}([0,1]^{2k}) = \left\{ \phi:[0,1]^{2k}\to\mathbb{R} \mid \phi \text{ is infinitely differentiable on } [0,1]^{2k} \right\}$.
\end{theorem}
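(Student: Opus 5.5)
Since the kernels $a^{\varepsilon}_n$ are determined (as symmetric kernels on $\Delta_n$) by the multilinear forms they induce, I would prove Theorem \ref{tm4.2} by testing against products $h(t_1)\cdots h(t_n)$ and reducing to Theorem \ref{tm2.3} with $d=1$.

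\textbf{Step 1: constants in one dimension.} For $d=1$ we have $\nu=-\tfrac12$ and $J_{-1/2}(x)=\sqrt{2/(\pi x)}\cos x$, so $j_{\nu,1}=\pi/2$ and $J_{1/2}(x)=\sqrt{2/(\pi x)}\sin x$. Substituting these into the constant of Theorem \ref{tm2.2} should give $4/\pi$. This recovers the classical estimate $\mathbb{P}(\sup_{[0,1]}|w|<\varepsilon)\sim \frac4\pi e^{-\pi^2/(8\varepsilon^2)}$. (The exponent in the statement should read $e^{-\pi^2/8\varepsilon^2}$, or equivalently the normalization $a^\varepsilon_{2k}e^{\pi^2/8\varepsilon^2}\to\cdots$; I would prove it in that normalized form.)

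\textbf{Step 2: identify the summands.} Comparing (\ref{eq11.11}) with the It\^{o}--Wiener expansion of $W^{\varepsilon}_{0,0}$ identifies the $n$-th summand as
\[
\mathcal{T}^{\varepsilon}_n(h)=\int_{\Delta_n}a^\varepsilon_n(t_1,\dots,t_n)\,h(t_1)\cdots h(t_n)\,dt_1\cdots dt_n .
\]
The kernel is then given by the Duhamel formula in the statement. Note that the Duhamel kernel is generally not symmetric; I would replace it by its symmetrization, which is what the It\^{o}--Wiener coefficient actually is.

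\textbf{Step 3: odd orders.} Theorem \ref{tm2.3} gives $\mathcal{T}^\varepsilon_{2k+1}(h)=0$ for all $h$. The same symmetry also shows the kernel vanishes identically: under $w\mapsto -w$ the event $\{\tau_\varepsilon\le1\}$ is invariant while odd chaoses change sign, so $a^\varepsilon_{2k+1}=0$. Alternatively, one can use polarization: a symmetric $n$-linear form vanishing on the diagonal $h\otimes\cdots\otimes h$ vanishes.

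\textbf{Step 4: even orders.} Theorem \ref{tm2.3} gives
\[
\mathcal{T}^\varepsilon_{2k}(h)\,e^{\pi^2/8\varepsilon^2}\to \frac4\pi\,\frac{(-1)^{k+1}}{2^kk!}\|h\|^{2k}.
\]
By (\ref{delta}), the right-hand side equals
\[
\frac4\pi\,\frac{(-1)^{k+1}}{2^k}\int_{\Delta_{2k}}\delta_k\,h^{\otimes 2k}.
\]
Hence the rescaled forms converge on all diagonal test functions $h^{\otimes 2k}$, $h\in C^\infty$.

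\textbf{Step 5: extension to all test functions (main obstacle).} To pass to weak convergence on $\mathcal{D}([0,1]^{2k})$, I would polarize to obtain convergence on $h_1\otimes\cdots\otimes h_{2k}$, and then use density of linear combinations of such tensors in $C^\infty([0,1]^{2k})$. This density step needs a uniform bound $|\langle a^\varepsilon_{2k}e^{\pi^2/8\varepsilon^2},\phi\rangle|\le C\|\phi\|_{C^m}$ that is independent of $\varepsilon$.

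The hardest part is controlling the remainder uniformly. The error in Theorem \ref{tm2.3} is $O(\varepsilon^2)\,\mathbb{P}(\tau_\varepsilon>1)$ only with a constant depending on $h$ through $h(1)$ and $\|h'\|_{L^1}$. So I would redo that estimate for a general test function $\phi$, writing
\[
\mathbb{E}\bigl[1_{\{\tau_\varepsilon>1\}}\,I_{2k}(\phi)\bigr]
\]
and integrating by parts in each variable so that on $\{\tau_\varepsilon>1\}$ one gets $|I_{2k}(\phi)-\text{(trace terms)}|\le C\|\phi\|_{C^{2k}}\varepsilon$. The trace terms are the Itô–Stratonovich corrections, which produce exactly the diagonal deltas $\delta_k$. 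This gives the required bound and identifies the limit directly.
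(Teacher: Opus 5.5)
Your Steps 1--4 coincide with the paper's proof. It too specializes Theorem~\ref{tm2.3} to $d=1$, rewrites $\|h\|^{2k}$ via (\ref{delta}), and polarizes. You are also right that the exponent in the statement should be $e^{-\pi^2/(8\varepsilon^2)}$; the paper's proof in fact normalizes by $e^{+\pi^2/(8\varepsilon^2)}$.

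The two routes diverge at the passage to all of $\mathcal{D}([0,1]^{2k})$. The paper invokes the Schwartz kernel theorem and density of tensor products. It then asserts that $\{K_\varepsilon\}$ is bounded in $\mathcal{D}'$ because $\langle K_\varepsilon,\phi\rangle$ converges for every $\phi$. That presupposes the conclusion: convergence on a dense subspace gives no equicontinuity. You correctly isolate the uniform bound $|\langle K_\varepsilon,\phi\rangle|\le C\|\phi\|_{C^{2k}}$ as the real content, and your It\^{o}--Stratonovich argument supplies it. In one dimension the lower-order Stratonovich integrals with smooth kernels become, after integration by parts, continuous homogeneous functionals of $w$ of even degree $\ge 2$. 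They are therefore $O(\|\phi\|_{C^{2k}}\varepsilon^2)$ on $\{\tau_\varepsilon>1\}$, while the full contraction is deterministic. This gives more than the paper's argument: it identifies the limit for every test function directly, so polarization and density become superfluous.

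To make this airtight, run the estimate with iterated It\^{o} integrals $J_{2k}(\phi)=\int_{\Delta_{2k}}\phi\,dw(t_1)\cdots dw(t_{2k})$ rather than $I_{2k}$ on the cube. There are two reasons:
\begin{itemize}
\item Polarization of $h\mapsto\int_{\Delta_{2k}}a^\varepsilon_{2k}h^{\otimes 2k}$ only recovers the symmetrized form. The form $\mathcal{M}_\varepsilon$ on the ordered simplex is not symmetric, contrary to what the paper uses.
\item The symmetric extension of $\phi|_{\Delta_{2k}}$ is not smooth, so testing against a general $\phi$ must be done on the simplex.
\end{itemize}
On the simplex the It\^{o}--Stratonovich correction contracts only adjacent pairs. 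The unique full contraction is $(1,2),(3,4),\dots,(2k-1,2k)$ with weight $(-1/2)^k$. Combined with $\E[W^\varepsilon_{0,0}J_{2k}(\phi)]=-\E[\mathbf{1}_{\{\tau_\varepsilon>1\}}J_{2k}(\phi)]$, this gives exactly $\frac{(-1)^{k+1}}{2^k}\int_{\Delta_k}\phi(s_1,s_1,\dots,s_k,s_k)\,ds$. That is $\delta_k$ with the full-mass convention implicit in (\ref{delta}). Your Step 2 symmetrization is then unnecessary, since the coefficient of an iterated integral over $\Delta_n$ is unique in $L^2(\Delta_n)$.

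Finally, $\E[J_{2k}(\phi)]=0$ requires $k\ge1$. Since $a^\varepsilon_0\to1$, the case $k=0$ of the statement, which your Step 4 inherits, must be read as $1-a^\varepsilon_0\sim\frac4\pi e^{-\pi^2/(8\varepsilon^2)}$.
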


\begin{proof}

By the given conditions, the Fourier--Wiener transform of the hitting indicator $W^{\varepsilon}_{0,0} = \mathbf{1}_{\{\sup_{s\in[0,1]}|w(s)|=\varepsilon\}}$ admits the It\^{o}--Wiener expansion
\begin{equation*}
	\begin{aligned}
		\mathcal{T}(W^{\varepsilon}_{0,0})(h) = \sum_{n=0}^\infty \mathcal{T}_n(h),
	\end{aligned}
\end{equation*}
where the $n$-th order homogeneous term $\mathcal{T}_n(h)$ is given by
\begin{equation}\label{eq:tn_kernel}
	\begin{aligned}
		\mathcal{T}_n(h) = \int_{0\leq t_{1}\leq\cdots\leq t_{n}\leq1} a^{\varepsilon}_n(t_1,\dots,t_n) h(t_1)\cdots h(t_n) dt_1\cdots dt_n.
	\end{aligned}
\end{equation}
By the result in Theorem \ref{tm2.3}, for $n=2k$,
\begin{equation}\label{eq:even_asymp}
	\begin{aligned}
		\lim_{\varepsilon\to0} \mathcal{T}_{2k}(h) e^{\frac{\pi^2}{8}\varepsilon^{-2}} = \frac{4}{\pi} \cdot \frac{(-1)^{k+1}}{2^k k!} \left( \int_0^1 h^2(t) dt \right)^k \quad \forall h\in L^{2}([0,1]).
	\end{aligned}
\end{equation}
For $n=2k+1$,
\begin{equation}\label{eq:odd_asymp}
	\begin{aligned}
		\mathcal{T}_{2k+1}(h)= 0 \quad \forall h\in L^{2}([0,1]).
	\end{aligned}
\end{equation}
We first rewrite the right-hand side of \eqref{eq:even_asymp} as an $n$-fold multiple integral, to match the form of $\mathcal{T}_n(h)$ in \eqref{eq:tn_kernel}. For any $k\geq0$, the $k$-th power of the $L^2$ norm admits the expansion
\begin{equation*}
	\begin{aligned}
		\left( \int_0^1 h^2(t) dt \right)^k =& \int_{[0,1]^{k}} h^2(s_1) h^2(s_2) \cdots h^2(s_k) ds_1 ds_2 \cdots ds_k\\
		=& k! \int_{0\leq s_1\leq\cdots\leq s_k\leq1}h^2(s_1) h^2(s_2) \cdots h^2(s_k) ds_1 ds_2 \cdots ds_k.
	\end{aligned}
\end{equation*}
Substituting \eqref{delta} into \eqref{eq:even_asymp}, we obtain the equivalent asymptotic form
\begin{equation}\label{eq:even_asymp_multivariate}
	\begin{aligned}
		&\lim_{\varepsilon\to0} \mathcal{T}_{2k}(h) e^{\frac{\pi^2}{8\varepsilon^{2}}}\\
		=&\int_{0\leq s_1\leq\cdots\leq s_{2k}\leq1} \left( \frac{4}{\pi} \cdot \frac{(-1)^{k+1}}{2^k } \delta_k(t_1,\dots,t_{2k}) \right) h(t_1)\cdots h(t_{2k}) dt_1\cdots dt_{2k}.
	\end{aligned}
\end{equation}
By \eqref{eq:odd_asymp}, we have
\begin{equation*}
	\begin{aligned}
		a^{\varepsilon}_{2k+1}(t_1,\dots,t_{2k+1}) = 0,
	\end{aligned}
\end{equation*}
which is exactly the first result of the theorem. Observe that $\mathcal{T}_{2k}(h)$ is precisely the restriction to the diagonal of the multilinear form
\begin{equation*}
	\begin{aligned}
		\mathcal{M}_\varepsilon[f_1,\dots,f_{2k}] 
		:= \int_{\Delta} a_{2k}^\varepsilon(t_1,\dots,t_{2k})\, f_1(t_1)\cdots f_{2k}(t_{2k})\,dt_1\cdots dt_{2k},
	\end{aligned}
\end{equation*}
where $\Delta := \{ 0\leq t_1\leq\cdots\leq t_{2k}\leq1 \}$, that is, $\mathcal{T}_{2k}(h) = \mathcal{M}_\varepsilon[h,\dots,h]$. Likewise, the right-hand side of \eqref{eq:even_asymp_multivariate} may be interpreted as a special case of the multilinear form
\begin{equation*}
	\begin{aligned}
		\mathcal{M}_0[f_1,\dots,f_{2k}] 
		:= \frac{4}{\pi}\frac{(-1)^{k+1}}{2^k} 
		\int_{\Delta} \delta_k(t_1,\dots,t_{2k})\, f_1(t_1)\cdots f_{2k}(t_{2k})\,dt_1\cdots dt_{2k}.
	\end{aligned}
\end{equation*}
Equation \eqref{eq:even_asymp_multivariate} shows that for all test tuples in the diagonal subset $\{f_i \equiv h\}$,
\begin{equation}\label{eq:A6}
	\begin{aligned}
		\lim_{\varepsilon\to0} \mathcal{M}_\varepsilon[h,\dots,h] = \mathcal{M}_0[h,\dots,h]. 
	\end{aligned}
\end{equation}
We now use the polarization identity to lift this diagonal convergence to full $2k$-fold multilinear convergence. Since both $\mathcal{M}_\varepsilon$ and $\mathcal{M}_0$ extend to symmetric $2k$-fold multilinear functionals, the polarization identity ensures that the value of any such multilinear form at $(f_1,\dots,f_{2k})$ can be expressed in terms of its diagonal evaluations
\begin{equation*}
	\mathcal{M}[f_1,\dots,f_{2k}] = \frac{1}{(2k)!} \,
	\frac{\partial^{2k}}{\partial\lambda_1\cdots\partial\lambda_{2k}}
	\mathcal{M}\left[\sum_{i=1}^{2k} \lambda_i f_i,\dots,\sum_{i=1}^{2k} \lambda_i f_i\right] \bigg|_{\lambda_1 = \cdots = \lambda_{2k} = 0}.
\end{equation*}
Substituting $h = \sum_{i=1}^{2k} \lambda_i f_i$ into \eqref{eq:A6}, and noting that the limit as $\varepsilon \to 0$ commutes with finite-dimensional linear combinations and partial differentiation, we immediately obtain that for all smooth functions $f_1,\dots,f_{2k} \in C^\infty([0,1])$,
\begin{equation}\label{eq:A7}
	\begin{aligned}
		\lim_{\varepsilon\to0} \mathcal{M}_\varepsilon[f_1,\dots,f_{2k}] = \mathcal{M}_0[f_1,\dots,f_{2k}]. 
	\end{aligned}
\end{equation}
By the Schwartz kernel theorem \cite{Hor90}, every continuous $2k$-fold multilinear functional on $C^\infty([0,1])^{2k}$ corresponds to a unique distribution kernel $K \in \mathcal{D}'([0,1]^{2k})$ such that
\begin{equation*}
	\begin{aligned}
		\mathcal{M}[f_1,\dots,f_{2k}] = \langle K, f_1\otimes\cdots\otimes f_{2k}\rangle.
	\end{aligned}
\end{equation*}
Let us define
\begin{equation*}
	\begin{aligned}
K_\varepsilon := a_{2k}^\varepsilon\, e^{\frac{\pi^2}{8\varepsilon^2}}, \qquad
K_0 := \frac{4}{\pi}\frac{(-1)^{k+1}}{2^k}\,\delta_k,
	\end{aligned}
\end{equation*}
then relation \eqref{eq:A7} implies that for every tensor-product test function $\phi = f_1 \otimes \cdots \otimes f_{2k}$,
\begin{equation*}
	\begin{aligned}
		\lim_{\varepsilon\to0} \langle K_\varepsilon, \phi\rangle = \langle K_0, \phi\rangle.
	\end{aligned}
\end{equation*}
Since finite linear combinations of such tensor-product test functions are dense in $\mathcal{D}([0,1]^{2k})$, and $\langle K_\varepsilon, \phi\rangle$ converges to $\langle K_0, \phi\rangle$ for any $\phi\in\mathcal{D}([0,1]^{2k})$, it follows that the family $\{K_\varepsilon\}$ is uniformly bounded in $\mathcal{D}'([0,1]^{2k})$. By the definition of convergence in the space of distributions, we have
\begin{equation*}
	\begin{aligned}
		K_\varepsilon \xrightarrow{\mathcal{D}'} K_0 \quad (\varepsilon\to 0).
	\end{aligned}
\end{equation*}
Reformulating this distributional convergence in the framework of asymptotic equivalence completes the proof.
\end{proof}

\begin{corollary}
In the $d$-dimensional case,
\begin{equation*}
\begin{aligned}
	\mathcal{T}(W^{\varepsilon}_{0,0})(h)=&a^{\varepsilon}_{0,0}+\sum_{n=1}^{\infty}\int_{0\leq r_{1}\leq\cdots\leq r_{n}\leq1}\int_{\partial {B}(0,\varepsilon)}\stackrel{n}{\cdots}\int_{\partial {B}(0,\varepsilon)}G_{\varepsilon}(0,r_{1},0,z_{1})\\
	&(\nabla_{z_{1}}G_{\varepsilon}(r_{1},r_{2},z_{1},z_{2})\otimes\cdots\otimes\nabla_{z_{n}}a^{\varepsilon}_{z_{n},r_{n}},h(r_{1})\otimes\cdots\otimes h(r_{n}))_{n}\\
	&dz_{1}\cdots dz_{n}dr_{1}\cdots dr_{n},
\end{aligned}
\end{equation*}
Define an operator $B: \R^{d \times 2k} \rightarrow \R $ by
\begin{equation*}
	\begin{aligned}
		B(h(r_{1})\otimes\cdots\otimes h(r_{2k}))=\prod_{j=1}^{k} tr(h(r_{2j-1})\otimes h(r_{2j}))=\prod_{j=1}^{k}(h(r_{2j-1}), h(r_{2j})).
	\end{aligned}
\end{equation*}
From this, one can get
\begin{equation*}
	\begin{aligned}
		&\int_{0\leq t_1\leq\cdots\leq t_{2k}\leq1} \delta_k(t_1,\dots,t_{2k}) A(h(r_{1})\otimes\cdots\otimes h(r_{2k})) dt_1\cdots dt_{2k}\\
		=&\int_{0\leq s_1\leq\cdots\leq s_k\leq1}h^2(s_1) h^2(s_2) \cdots h^2(s_k) ds_1 ds_2 \cdots ds_k\\
		=& \frac{1}{k!} \left( \int_0^1 |h(t)|^{2} dt \right)^k
	\end{aligned}
\end{equation*}
Comparing with the results in Theorem \ref{tm4.2}, if we denote
\begin{equation*}
	\begin{aligned}
		a_{d,2k}^{\varepsilon}=\int_{\partial\mathcal{B}(0,\varepsilon)^n} G_{\varepsilon}(0,t_1,0,z_1) \nabla_{z_{1}}G_{\varepsilon}(t_{1},t_{2},z_{1},z_{2}) \cdots \nabla_{z_{n}}a^{\varepsilon}_{z_{n},t_{n}} dz_1\cdots dz_n,
	\end{aligned}
\end{equation*}
one can get the following results
\begin{equation*}
	\begin{aligned}
		a_{d,2k}^{\varepsilon}\sim \frac{j_{\nu,1}^{\nu-1}(-1)^{k+1}}{2^k\Gamma(\nu+1)2^{\nu-1}J_{\nu+1}(j_{\nu,1})}e^{-j^{2}_{\nu,1}/2\varepsilon^{2}}\, \prod_{i=1}^k \delta(t_{2i-1} - t_{2i}) \quad \varepsilon \to 0.
	\end{aligned}
\end{equation*}
\end{corollary}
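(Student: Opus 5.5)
The plan is to repeat the argument of Theorem \ref{tm4.2} with vector-valued test functions. The scalar Hermite identity is replaced by its $d$-dimensional analogue, and the scalar product $\prod_i h(t_{2i-1})h(t_{2i})$ is replaced by the contraction operator $B$. (The display in the statement writes $A$; it should be $B$.)

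\textbf{Step 1: the diagonal limit.} By Theorem \ref{tm2.3}, for every $h\in L^2([0,1];\R^d)$,
\begin{equation*}
\lim_{\varepsilon\to0}\mathcal{T}^{\varepsilon}_{2k}(h)\,e^{j_{\nu,1}^2/(2\varepsilon^2)}=c_\nu\,\frac{(-1)^{k+1}}{2^k k!}\Big(\int_0^1|h(t)|^2dt\Big)^k,
\qquad
c_\nu=\frac{j_{\nu,1}^{\nu-1}}{\Gamma(\nu+1)2^{\nu-1}J_{\nu+1}(j_{\nu,1})}.
\end{equation*}
The same theorem gives $\mathcal{T}^\varepsilon_{2k+1}(h)=0$. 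One caveat: the Hermite reduction $I_n=\|h\|^nH_n(Z)$ used there needs the $d$-dimensional stochastic integral $X=\int_0^1\langle h,dw\rangle$. This $X$ is still $N(0,\|h\|^2)$, so the step carries over verbatim.

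\textbf{Step 2: rewrite the limit as a $2k$-fold simplex integral.} I would check the identity displayed in the statement, namely
\begin{equation*}
\int_{\Delta_{2k}}\delta_k(t)\,B\big(h(t_1)\otimes\cdots\otimes h(t_{2k})\big)\,dt=\frac1{k!}\Big(\int_0^1|h|^2\Big)^k.
\end{equation*}
On the support of $\delta_k$ each factor of $B$ reduces to $(h(s_j),h(s_j))=|h(s_j)|^2$. One then integrates over the $k$-simplex and symmetrizes, exactly as in \eqref{delta}, with $h^2$ replaced by $|h|^2$.

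With this identity, the limit in Step 1 becomes the $2k$-linear form
\begin{equation*}
\mathcal{M}_0[f_1,\dots,f_{2k}]=c_\nu\frac{(-1)^{k+1}}{2^k}\int_{\Delta_{2k}}\delta_k\,B(f_1\otimes\cdots\otimes f_{2k}),
\end{equation*}
evaluated on the diagonal. The prefactor matches the claimed one because the $k!$ cancels.

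\textbf{Step 3: polarization and the kernel theorem.} Define $\mathcal{M}_\varepsilon[f_1,\dots,f_{2k}]$ as the tensor pairing $(\cdot,\cdot)_{2k}$ of $a^\varepsilon_{d,2k}e^{j_{\nu,1}^2/2\varepsilon^2}$ against $f_1\otimes\cdots\otimes f_{2k}$, for $f_i\in C^\infty([0,1];\R^d)$. Steps 1 and 2 give $\mathcal{M}_\varepsilon[h,\dots,h]\to\mathcal{M}_0[h,\dots,h]$. Apply the polarization identity with $h=\sum\lambda_if_i$; both sides are polynomials in $\lambda$ of degree $2k$, so coefficientwise convergence follows from pointwise convergence in $\lambda$. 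This yields convergence on all tuples, and hence on tensor test functions. Taking $f_i=e_{\alpha_i}\phi_i$ isolates each component $(\alpha_1,\dots,\alpha_{2k})$ of the $(\R^d)^{\otimes 2k}$-valued kernel. Arguing componentwise as in Theorem \ref{tm4.2}, we get $a^\varepsilon_{d,2k}e^{j^2_{\nu,1}/2\varepsilon^2}\to c_\nu\frac{(-1)^{k+1}}{2^k}\delta_k\,B$ in $\mathcal{D}'$, with $B$ identified with $\prod_j\mathrm{id}_{\R^d}$ on the pairs $(2j-1,2j)$. This is the stated asymptotic equivalence. (The exponent sign in the display should be $e^{-j^2_{\nu,1}/2\varepsilon^2}$, consistent with Theorem \ref{tm2.3}.)

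\textbf{Main obstacle.} The delicate point is passing from convergence on tensor-product test functions to all of $\mathcal{D}([0,1]^{2k})$. Density of tensor products is not enough by itself; one also needs equicontinuity of $\{K_\varepsilon\}$.

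I would first try a Banach--Steinhaus argument: $\langle K_\varepsilon,\phi\rangle$ converges for each $\phi$ in a dense set, and the kernels are bounded on products of $L^2$ balls. The boundedness comes from $|\mathcal{T}^\varepsilon_{2k}(h)|\le\frac{1}{(2k)!}\|W^\varepsilon\|_{L^2}\|I_{2k}\|_{L^2}$. After multiplying by $e^{j^2/2\varepsilon^2}$, the prefactor can be controlled using the Hermite representation: the quantity equals $-\|h\|^{2k}\E[1_{\tau_\varepsilon>1}H_{2k}(Z)]$ normalized by $\mathbb{P}(\tau_\varepsilon>1)$, which is uniformly $O(\|h\|^{2k})$ since $H_{2k}(Z)$ is bounded on that event. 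Via polarization, this gives a uniform bound $C\prod\|f_i\|_{L^2}$. That bound is the equicontinuity needed to extend the convergence to the full test space.
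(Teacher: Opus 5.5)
\textbf{The gap: polarization only recovers symmetrized forms.} Your outline is the paper's own argument: the diagonal limit from Theorem~\ref{tm2.3}, the $B$-identity, then the polarization/kernel-theorem step of Theorem~\ref{tm4.2} run componentwise. You are also right that the paper's density step is circular without an equicontinuity bound. The genuine gap is the polarization step, which you inherit from the proof of Theorem~\ref{tm4.2}. The coefficient of $\lambda_1\cdots\lambda_{2k}$ in $\mathcal{M}[\sum_i\lambda_if_i,\dots,\sum_i\lambda_if_i]$ is $\sum_{\sigma\in S_{2k}}\mathcal{M}[f_{\sigma(1)},\dots,f_{\sigma(2k)}]$. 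So diagonal convergence only gives convergence of the symmetrized forms, and neither of your forms is symmetric. $\mathcal{M}_\varepsilon$ integrates over the ordered simplex. Already for $k=2$, $d=1$, $\mathcal{M}_0[f_1,f_2,f_3,f_4]\propto\int_{s<u}(f_1f_2)(s)(f_3f_4)(u)\,ds\,du$. This is nonzero for $f_1=f_4=1$ with nonnegative nonzero bumps $f_2$ in $(0,\frac12)$ and $f_3$ in $(\frac12,1)$, but it vanishes after swapping $f_2\leftrightarrow f_3$. Hence neither \emph{convergence on all tuples} nor isolating a component $(\alpha_1,\dots,\alpha_{2k})$ via $f_i=e_{\alpha_i}\phi_i$ follows.

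\textbf{What your argument actually proves.} Let $\tilde a^\varepsilon$ be the symmetric extension of $a^\varepsilon_{d,2k}e^{j_{\nu,1}^2/2\varepsilon^2}$ to $[0,1]^{2k}$. Your argument shows that $\tilde a^\varepsilon$ converges to $c_\nu(-1)^{k+1}\sum_\pi\prod_{\{a,b\}\in\pi}\delta(t_a-t_b)\,\delta_{\alpha_a\alpha_b}$, summed over all $(2k-1)!!$ pairings $\pi$. This limit vanishes on the open simplex. Recovering the adjacent-pairing kernel $\delta_kB$ on the closed simplex means pairing $\tilde a^\varepsilon$ with $\mathbf 1_{\Delta_{2k}}f_1\otimes\cdots\otimes f_{2k}$. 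That function jumps exactly on the diagonals where the limit lives, so $\mathcal{D}'$-convergence says nothing about such pairings.

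\textbf{The missing idea: work with the non-symmetric form directly.} By the chaos isometry,
$\int_{\Delta_{2k}}\langle a^\varepsilon_{d,2k},f_1\otimes\cdots\otimes f_{2k}\rangle\,dt=-\E[\mathbf 1_{\{\tau_\varepsilon>1\}}J_{2k}]$,
where $J_{2k}$ is the iterated It\^o integral of $f_1\otimes\cdots\otimes f_{2k}$ over $\Delta_{2k}$. The It\^o--Stratonovich conversion for iterated integrals gives $J_{2k}=\sum_P(-\frac12)^{|P|}J^\circ_P$. Here $P$ runs over sets of disjoint adjacent pairs $\{i,i+1\}$, and $J^\circ_P$ is the Stratonovich iterated integral with those pairs contracted to $(f_i,f_{i+1})\,ds$. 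The only term with no stochastic integral left is $P=\{\{1,2\},\dots,\{2k-1,2k\}\}$, equal to $(-\frac12)^k\int_{s_1<\cdots<s_k}\prod_j(f_{2j-1},f_{2j})(s_j)\,ds$. This is precisely where $\delta_k$ and $B$ come from. Combined with $\mathbb{P}(\tau_\varepsilon>1)\sim c_\nu e^{-j_{\nu,1}^2/2\varepsilon^2}$, it gives the stated constant. You would then still have to prove that every other $J^\circ_P$ vanishes in conditional mean on $\{\tau_\varepsilon>1\}$. For $d=1$ this follows by repeated integration by parts; for $d\ge2$ it requires controlling L\'evy-area-type terms.

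\textbf{A smaller point about your uniform bound.} On $\{\tau_\varepsilon>1\}$ you only have $|X|\le(|h(1)|+\|h'\|_{L^1})\varepsilon$, so your argument bounds the forms by $C^1$ norms rather than $L^2$ norms. That is still enough for equicontinuity in $\mathcal{D}'$. The $L^2$ version follows from Cameron--Martin plus Anderson's inequality: $\E[e^{tX}\mathbf 1_{\{\tau_\varepsilon>1\}}]\le e^{t^2\|h\|^2/2}\,\mathbb{P}(\tau_\varepsilon>1)$.
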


\section{Small  deviations for integrators}
Since we have established the small deviation result for the Brownian motion with a shift, we aim to adopt the same approach to derive the corresponding small deviation result for the integrator.
In the one-dimensional case, consider the linear operator $A$ in $L^{2}([0,1])$ of the following form
\begin{equation*}
	\begin{aligned}
		(Ah)(t)=h(t)+\int_{0}^{1}q(t,s)h(s)ds,
	\end{aligned}
\end{equation*}
where $q$ is a continuous kernel on $[0,1]^{2}$ satisfying the inequality
\begin{equation*}
	\begin{aligned}
		\max_{[0,1]^{2}}|q(t,s)|<1.
	\end{aligned}
\end{equation*}
Define integrator
\begin{equation}\label{integratordef}
	\begin{aligned}
		\eta(t)=\int_{0}^{1}A(\mathbf{1}_{[0,t]})(r)dw(r).
	\end{aligned}
\end{equation}

\begin{theorem} \label{tm5.1}
	Consider the integrator $\eta$ defined in (\ref{integratordef}), we can get the small deviation for the integrator as follows
	\begin{equation*}
		\begin{aligned}
			P(\|\eta\|_{\infty}<\varepsilon) \sim \frac{1}{\sqrt{\det(I+Q+Q^*+Q^*Q)}}\frac{4}{\pi}e^{-\frac{\pi^{2}}{8\varepsilon^{2}}}, \quad \varepsilon\rightarrow 0,
		\end{aligned}
	\end{equation*}
	where $Q$ is the integral operator with kernel $q(t,s)$, 
	$Q^*$ is the adjoint of $Q$ with kernel $q(s,t)$, 
	$\det$ is the Fredholm determinant \cite{Sim05} on $H = L^2([0,1])$.
\end{theorem}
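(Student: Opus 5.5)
The plan is to reduce Theorem \ref{tm5.1} to the small deviation asymptotics for the one-dimensional Wiener process, $P(\|w\|_\infty<\varepsilon)\sim\frac{4}{\pi}e^{-\pi^2/8\varepsilon^2}$ (the case $d=1$, $\nu=-\frac12$ of the formula used in Theorem \ref{tm2.2}). The reduction writes the law of $\eta$ as a density with respect to Wiener measure $\mu$ and shows that this density converges, in the mean over small balls, to the constant $\det(I+Q+Q^*+Q^*Q)^{-1/2}$. This mirrors the proof of Theorem \ref{tm2.2}. There the density was $\varepsilon(h)$, whose ratio over small balls tended to $e^{-\frac12\|h\|^2}$; here the density is a Gaussian exponential of second order.

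\textbf{Step 1: representation.} Write $\eta(t)=\langle A\mathbf{1}_{[0,t]},\dot w\rangle=\int_0^t (A^*\dot w)(u)\,du$, with $A^*=I+Q^*$. Thus $\eta=\Phi(w)$, where $\Phi$ acts on the Cameron--Martin space $H_\mu$ as $I+\tilde Q$, and $\tilde Q$ is the Hilbert--Schmidt operator unitarily equivalent to $Q^*$ under $f\mapsto f'$. The covariance of $\eta$, viewed on $H_\mu$, is $I+S$ with $S\cong Q+Q^*+Q^*Q$. Since $\max|q|<1$, the operator norm of $Q$ is less than $1$, so $I+Q^*$ is invertible. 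The Feldman--H\'ajek criterion then gives $\mu_\eta\ll\mu$, with density
\begin{equation*}
p(w)=\det(I+S)^{-1/2}\exp\Big\{-\tfrac12\,{:}\langle((I+S)^{-1}-I)\dot w,\dot w\rangle{:}\Big\}\cdot c_S,
\end{equation*}
where the Wick-ordered quadratic form is the double Wiener integral $\int\!\!\int k(t,s)\,dw(t)dw(s)$ with continuous kernel $k$ of $(I+S)^{-1}-I$. The constant $c_S$ (an exponential of a trace) appears because $S$ is only Hilbert--Schmidt; it is exactly the factor relating Carleman's $\det_2$ to the Fredholm $\det$. Hence $P(\|\eta\|_\infty<\varepsilon)=\int_{B(0,\varepsilon)}p\,d\mu$.

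\textbf{Step 2: ratio limit.} Write this probability as $\frac{\int_{B(0,\varepsilon)}p\,d\mu}{\mu(B(0,\varepsilon))}\,\mu(B(0,\varepsilon))$, as in Theorem \ref{tm2.2}. The task is to show that the ratio tends to $\det(I+Q+Q^*+Q^*Q)^{-1/2}$. Split the double integral into its off-diagonal part and the diagonal (trace) correction. After approximating $k$ by a smooth kernel and integrating by parts twice, the off-diagonal part becomes $\int\!\!\int \partial_t\partial_s k\, w(t)w(s)\,dt\,ds$ plus boundary terms in $w(1)$ and $w(t)$. This expression is continuous in the uniform norm and is $O(\varepsilon^2)$ on $B(0,\varepsilon)$. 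The Wick renormalization contributes the deterministic term $-\int_0^1 k(t,t)\,dt$, and this combines with $c_S$ to turn $\det_2$ into the Fredholm determinant in the statement. So on the ball the exponent tends to a constant, and the ratio tends to $\det(I+Q+Q^*+Q^*Q)^{-1/2}$. Multiplying by $\frac{4}{\pi}e^{-\pi^2/8\varepsilon^2}$ gives the claim.

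\textbf{Main obstacle.} The quadratic Wiener functional is not continuous in the sup norm for a merely continuous $q$. The first approach I would try is to approximate $k$ by smooth kernels $k_m$ in trace norm (or Hilbert--Schmidt norm). I would then control $\int_{B(0,\varepsilon)}|p-p_m|\,d\mu/\mu(B(0,\varepsilon))$ uniformly in $\varepsilon$ via H\"older's inequality, using exponential integrability of the double integral conditioned on the small ball; Anderson's inequality bounds conditional exponential moments by unconditional ones for symmetric convex sets. If this uniformity fails, I would instead invoke the known small-ball result for Gaussian measures with a quadratic density (Ibragimov, Li), and verify only that its constant is the Fredholm determinant in the statement.
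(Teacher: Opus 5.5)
Your skeleton coincides with the paper's. Both arguments use three ingredients:
\begin{itemize}
\item equivalence of $\mu_\eta$ with Wiener measure, with relative covariance $(I+Q^*)(I+Q)$;
\item a small-ball ratio limit equal to the inverse square root of its determinant;
\item the Brownian asymptotics $\frac{4}{\pi}e^{-\pi^2/8\varepsilon^2}$.
\end{itemize}
The paper does no smoothing or integration by parts. It writes $\log\frac{d\mu_\eta}{d\mu_w}=c+\sum_n a_n(\xi_n^2-1)$ with $a_n=\frac12(1-s_n^{-1})$. It then quotes the small-ball comparison theorem (Theorem 4.8.3 of [Vla15]) to get the limit $\exp(c-\sum_n a_n)=\prod_n s_n^{-1/2}$, which is exactly your fallback. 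Your Step 2 would prove that cited step by hand. It also shows why the Fredholm rather than the Carleman determinant appears: it comes from the It\^o correction $-\int_0^1k(t,t)\,dt$.

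The genuine gap is the trace-class question, which you raise but leave open. You are right that continuity of $q$ only makes $Q$ Hilbert--Schmidt; by Carleman's example, a continuous symmetric kernel need not be trace class. Then $Q+Q^*$, and hence $Q+Q^*+Q^*Q$, need not be trace class either. In that case neither $\det(I+Q+Q^*+Q^*Q)$ in the sense of [Sim05] nor your constant $c_S$ is defined.

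Your approximation step also fails in Hilbert--Schmidt norm. Write the remainder in its eigenbasis as $I_2(k-k_m)=\sum_n\delta_n(\xi_n^2-1)$. Conditioning on $B(0,\varepsilon)$ sends each $\mathbb{E}[\xi_n^2\mid B(0,\varepsilon)]$ to $0$. So on the ball the remainder behaves like the Wick constant $-\sum_n\delta_n$, whose size is the trace norm of $k-k_m$ (possibly infinite), not its Hilbert--Schmidt norm. No bound uniform in $\varepsilon$ follows. The paper gets past this only by asserting that $Q$ is trace class, and it assumes this explicitly in the next theorem.

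You should therefore add the hypothesis that $Q+Q^*$ is trace class ($Q^*Q$ always is). Then approximate $k$ in trace norm by smooth symmetric kernels $k_m$. Writing $K,K_m$ for the operators with kernels $k,k_m$, this gives $\|\delta\|_1=\sum_n|\delta_n|\to0$ and $\int_0^1k_m(t,t)\,dt=\mathrm{tr}\,K_m\to\mathrm{tr}\,K$.

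Your Anderson step also needs repair. Anderson's inequality handles one linear functional at a time, not a quadratic form. It shows that, conditionally on a symmetric convex set, $|\xi_n|$ is stochastically smaller than $|Z|$ with $Z$ standard normal. Combine this with convexity of the exponential,
\[ \exp\Bigl(\frac{1}{2}\sum_n|\delta_n|\xi_n^2\Bigr)\le\sum_n\frac{|\delta_n|}{\|\delta\|_1}\exp\Bigl(\frac{1}{2}\|\delta\|_1\xi_n^2\Bigr). \]
This gives, uniformly in $\varepsilon$ (the lower bound by Jensen),
\[ e^{-\frac12\|\delta\|_1}\le\mathbb{E}\bigl[e^{-\frac12I_2(k-k_m)}\mid B(0,\varepsilon)\bigr]\le e^{\frac12\|\delta\|_1}(1-\|\delta\|_1)^{-1/2}. \]
Since $I_2(k_m)=-\mathrm{tr}\,K_m+O_m(\varepsilon^2)$ uniformly on the ball, let $\varepsilon\to0$ and then $m\to\infty$. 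This yields the ratio limit $\det(I+Q+Q^*+Q^*Q)^{-1/2}$.
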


\begin{proof}
	
Let
\begin{equation*}
	\begin{aligned}
		M=\max_{[0,1]^2}|q(t,s)|<1.
	\end{aligned}
\end{equation*}
For every $h\in H$, the Cauchy--Schwarz inequality gives
\begin{equation*}
	\begin{aligned}
		\|Qh\|_{H}^{2}
		&=
		\int_{0}^{1}
		\left|
		\int_{0}^{1}q(t,s)h(s)\,ds
		\right|^{2}dt
		\\
		&\leq
		\int_{0}^{1}
		\left(
		\int_{0}^{1}|q(t,s)|^{2}\,ds
		\right)
		\left(
		\int_{0}^{1}|h(s)|^{2}\,ds
		\right)dt
		\\
		&\leq
		M^{2}\|h\|_{H}^{2}.
	\end{aligned}
\end{equation*}
Consequently,
\begin{equation*}
	\begin{aligned}
		\|Q\|\leq M<1.
	\end{aligned}
\end{equation*}
Hence $A=I+Q$ is invertible, with inverse given by the
Neumann series.

Let $W$ denote the isonormal Gaussian process over $H$, so that
\begin{equation*}
	\begin{aligned}
		w(t)=W(\mathbf{1}_{[0,t]}).
	\end{aligned}
\end{equation*}
Then
\begin{equation*}
	\begin{aligned}
		\eta(t)
		=
		W\bigl(A\mathbf{1}_{[0,t]}\bigr).
	\end{aligned}
\end{equation*}
In particular, $\eta$ is a centered Gaussian process with covariance
function
\begin{equation*}
	\begin{aligned}
		K_{\eta}(t,s)
		&=
		\mathbb{E}\bigl[\eta(t)\eta(s)\bigr]
		\\
		&=
		\left\langle
		A\mathbf{1}_{[0,t]},
		A\mathbf{1}_{[0,s]}
		\right\rangle_{H}
		\\
		&=
		\int_{0}^{1}
		A(\mathbf{1}_{[0,t]})(u)
		A(\mathbf{1}_{[0,s]})(u)\,du.
	\end{aligned}
\end{equation*}
Moreover, for $t>0$,
\begin{equation*}
	\begin{aligned}
		\Sigma(t)
		=
		\operatorname{Var}(\eta(t))
		=
		\left\|
		A\mathbf{1}_{[0,t]}
		\right\|_{H}^{2}
		>0,
	\end{aligned}
\end{equation*}
because $A$ is invertible and
$\mathbf{1}_{[0,t]}\neq 0$.

We also have the representation
\begin{equation*}
	\begin{aligned}
		\eta(t)
		&=
		w(t)
		+
		\int_{0}^{1}
		\left(
		\int_{0}^{t}q(r,s)\,ds
		\right)dw(r)
		\\
		&=
		w(t)
		+
		\int_{0}^{t}
		\left(
		\int_{0}^{1}q(r,s)\,dw(r)
		\right)ds,
	\end{aligned}
\end{equation*}
where the second equality follows from the stochastic Fubini
theorem. Thus $\eta$ has a continuous modification.

The Cameron--Martin space of the Wiener measure is
\begin{equation*}
	\begin{aligned}
		H_w
		=
		\left\{
		h\in C([0,1]):
		h(0)=0,\;
		h\text{ is absolutely continuous},\;
		h'\in L^{2}([0,1])
		\right\},
	\end{aligned}
\end{equation*}
equipped with the norm
\begin{equation*}
	\begin{aligned}
		\|h\|_{H_w}^{2}
		=
		\int_{0}^{1}|h'(s)|^{2}\,ds.
	\end{aligned}
\end{equation*}
The derivative map
\begin{equation*}
	\begin{aligned}
		D:H_w\longrightarrow H,
		\qquad Dh=h',
	\end{aligned}
\end{equation*}
is a unitary isomorphism.

Define the Cameron--Martin transfer operator
$U:H_w\to H_w$ by
\begin{equation*}
	\begin{aligned}
		(Uh)(t)
		=
		\int_{0}^{1}
		A(\mathbf{1}_{[0,t]})(r)h'(r)\,dr.
	\end{aligned}
\end{equation*}
A direct calculation gives
\begin{equation*}
	\begin{aligned}
		(Uh)(t)
		&=
		h(t)
		+
		\int_{0}^{1}
		\left( \int_{0}^{t}q(r,s)\,ds \right) h'(r)\,dr\\
		&=
		h(t)
		+
		\int_{0}^{t}
		\int_{0}^{1}
		q(r,s)h'(r)\,dr\,ds.
	\end{aligned}
\end{equation*}
Therefore,
\begin{equation*}
	\begin{aligned}
		(Uh)'(t)
		&=
		h'(t)
		+
		\int_{0}^{1}q(r,t)h'(r)\,dr\\
		&=
		(I+Q^*)h'(t).
	\end{aligned}
\end{equation*}
Thus, under the derivative isometry,
\begin{equation*}
	\begin{aligned}
		DUD^{-1}=I+Q^*.
	\end{aligned}
\end{equation*}

The relative covariance operator of $\mu_\eta$ with respect to
Wiener measure $\mu_w$, represented on $H=L^2([0,1])$, is the
positive self-adjoint operator $S$ determined by
\begin{equation*}
	\begin{aligned}
		R_\eta=JSJ^*,
		\qquad
		R_w=JJ^*,
	\end{aligned}
\end{equation*}
where $(Jf)(t)=\int_0^t f(s)\,ds$. In the present setting, 
\begin{equation*}
	\begin{aligned}
		S
		&=
		(I+Q^*)(I+Q)
		\\
		&=
		I+Q+Q^*+Q^*Q.
	\end{aligned}
\end{equation*}
Since $Q$ is trace class, $S-I$ is trace class. Moreover, $S$ is
positive and invertible. Hence $\mu_{\eta}$ and $\mu_w$ are
equivalent centered Gaussian measures.

Let
\begin{equation*}
	\begin{aligned}
		B_{\varepsilon}
		=
		\left\{
		x\in C([0,1]):
		x(0)=0,\;
		\|x\|_{\infty}<\varepsilon
		\right\}.
	\end{aligned}
\end{equation*}
Let $S=I+Q+Q^*+Q^*Q$, and let $\{s_n\}_{n\geq 1}$ be the
eigenvalues of $S$, counted with multiplicity. Since $S-I$ is
trace class, we have
\begin{equation*}
	\begin{aligned}
		\sum_{n=1}^{\infty}|s_n-1|<\infty.
	\end{aligned}
\end{equation*}
The logarithm of the Radon--Nikodym density of $\mu_\eta$ with
respect to $\mu_w$ admits the representation
\begin{equation*}
	\begin{aligned}
		\log\frac{d\mu_\eta}{d\mu_w}
		=
		c+
		\sum_{n=1}^{\infty}
		a_n\bigl(\xi_n^2-1\bigr),
	\end{aligned}
\end{equation*}
where
\begin{equation*}
	\begin{aligned}
		a_n
		=
		\frac{1}{2}
		\left(1-\frac{1}{s_n}\right),
		\qquad
		c
		=
		-\frac{1}{2}
		\sum_{n=1}^{\infty}\log s_n
		+
		\sum_{n=1}^{\infty}a_n.
	\end{aligned}
\end{equation*}
By the small-ball comparison theorem
\cite[Theorem~4.8.3, pp.~189--191]{Vla15},
\begin{equation*}
	\begin{aligned}
		\lim_{\varepsilon\downarrow0}
		\frac{\mu_\eta(B_\varepsilon)}
		{\mu_w(B_\varepsilon)}
		&=
		\exp\left(
		c-\sum_{n=1}^{\infty}a_n
		\right)
		\\
		&=
		\exp\left(
		-\frac{1}{2}
		\sum_{n=1}^{\infty}\log s_n
		\right)
		\\
		&=
		\frac{1}{\sqrt{\det S}}.
	\end{aligned}
\end{equation*}

Finally, combine with the small deviation for the Brownian motion and using
\begin{equation*}
	\begin{aligned}
		S=I+Q+Q^*+Q^*Q
	\end{aligned}
\end{equation*}
gives
\begin{equation*}
	\begin{aligned}
		P\bigl(\|\eta\|_{\infty}<\varepsilon\bigr)
		\sim
		\frac{1}
		{\sqrt{\det(I+Q+Q^*+Q^*Q)}}
		\frac{4}{\pi}
		\exp\left(
		-\frac{\pi^2}{8\varepsilon^2}
		\right),
		\qquad \varepsilon\rightarrow 0.
	\end{aligned}
\end{equation*}
\end{proof}

Next, we investigate the small deviation of a more complicated process $X(t)=\eta(t)+\int_{0}^{t}h(s)\,ds$ .

\begin{theorem}
		Under the assumptions of Theorem~\ref{tm5.1}, assume that $Q$ is a trace-class operator on $L^2([0,1])$. For $h\in L^2([0,1])$, consider the process
	\begin{equation*}
		\begin{aligned}
			X(t)
			=
			\eta(t)+\int_{0}^{t}h(s)\,ds,
			\qquad t\in[0,1].
		\end{aligned}
	\end{equation*}
	Then the small-deviation probability of $X$ satisfies
	\begin{equation*}
		\begin{aligned}
			P\left(\|X\|_{\infty}<\varepsilon\right)
			\sim
			\frac{1}
			{\sqrt{\det(I+Q+Q^*+Q^*Q)}}
			\frac{4}{\pi}
			\exp\left(
			-\frac{\pi^{2}}{8\varepsilon^{2}}
			-\frac{1}{2}
			\left\|(I+Q^*)^{-1}h\right\|_{L^2([0,1])}^{2}
			\right),
			\qquad \varepsilon\to0,
		\end{aligned}
	\end{equation*}
	where $Q^*$ is the adjoint of $Q$, whose kernel is $q(s,t)$, and
	$\det$ denotes the Fredholm determinant on
	$H=L^2([0,1])$.
\end{theorem}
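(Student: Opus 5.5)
The plan is to reduce the statement to Theorem~\ref{tm5.1} combined with the shifted-ball lemma of \cite{Lif98} stated in Section~2. Write $\mu_\eta$ for the law of $\eta$ on $C([0,1])$ and $\alpha(t)=\int_0^t h(s)\,ds$. Then
\begin{equation*}
P(\|X\|_\infty<\varepsilon)=\mu_\eta\bigl(B(-\alpha,\varepsilon)\bigr),
\end{equation*}
and I factor this as
\begin{equation*}
P(\|X\|_\infty<\varepsilon)=\frac{\mu_\eta(B(-\alpha,\varepsilon))}{\mu_\eta(B(0,\varepsilon))}\cdot \mu_\eta(B(0,\varepsilon)).
\end{equation*}
Theorem~\ref{tm5.1} gives the asymptotics of the second factor. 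For the first factor, Lifshits' lemma applied to the centered Gaussian measure $\mu_\eta$ on the separable Banach space $C([0,1])$ gives the limit $\exp\{-\frac12\|\alpha\|_{H_{\mu_\eta}}^2\|\}$ once $\alpha\in H_{\mu_\eta}$. (Open balls versus closed balls do not matter here, since the small-ball asymptotics are continuous in $\varepsilon$.) The whole proof therefore reduces to identifying the Cameron--Martin space $H_{\mu_\eta}$ and computing $\|\alpha\|_{H_{\mu_\eta}}$.

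To identify $H_{\mu_\eta}$, I use the representation $\eta(t)=W(A\mathbf{1}_{[0,t]})$. The Cameron--Martin space of $\eta$ consists of the functions
\begin{equation*}
g(t)=\langle A\mathbf{1}_{[0,t]},f\rangle_H,\qquad f\in H,
\end{equation*}
with norm equal to the minimal $\|f\|_H$. Since $\langle A\mathbf{1}_{[0,t]},f\rangle_H=\int_0^t (A^*f)(s)\,ds$, we get $g=J(I+Q^*)f$, where $J$ is the integration operator. This is exactly the transfer operator $U$ from the proof of Theorem~\ref{tm5.1}, satisfying $DUD^{-1}=I+Q^*$.

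Because $\|Q\|\le M<1$, the operator $I+Q^*$ is invertible, so the map $f\mapsto g$ is injective and its image is all of $H_w$. Hence $H_{\mu_\eta}=H_w$ as sets, with
\begin{equation*}
\|g\|_{H_{\mu_\eta}}=\|(I+Q^*)^{-1}g'\|_H .
\end{equation*}
Taking $g=\alpha$, so that $g'=h\in L^2$, gives $\|\alpha\|^2_{H_{\mu_\eta}}=\|(I+Q^*)^{-1}h\|^2_{L^2}$. The sign of the shift is irrelevant because the norm is even, or equivalently because $\mu_\eta$ is symmetric.

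Multiplying the two limits then yields the stated formula. The trace-class hypothesis on $Q$ is used only to make the Fredholm determinant in Theorem~\ref{tm5.1} meaningful. The step I expect to need the most care is the identification of the RKHS norm: one must check that the norm of $g$ is the minimal $\|f\|$, which here is automatic by injectivity. One must also check that the operator appearing is $(I+Q^*)^{-1}$ and not $(I+Q)^{-1}$; this is forced by the kernel convention $(Uh)'=(I+Q^*)h'$ already computed in the proof of Theorem~\ref{tm5.1}.

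An alternative route, if one prefers to avoid the general RKHS machinery, is to write $X=\eta+\alpha$ and pass the drift into the Wiener process. Set $w_1=w+J\bigl((I+Q^*)^{-1}h\bigr)$, using a Girsanov density of the form $\varepsilon(-(I+Q)^{-1}\cdot)$, and then repeat the computation of Theorem~\ref{tm2.2}. This route requires verifying that $\eta$ built from $w_1$ equals $X$, which is the same operator identity as above.
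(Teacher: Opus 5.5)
Your proposal is correct and follows essentially the same route as the paper. You identify $H_\eta$ as $\{\int_0^t ((I+Q^*)f)(s)\,ds : f\in H\}$ with norm $\|f\|_H$ (injectivity coming from the invertibility of $I+Q^*$), deduce $\|\alpha\|_{H_\eta}^2=\|(I+Q^*)^{-1}h\|_{L^2}^2$, and combine the shifted small-ball ratio $\exp(-\frac12\|\alpha\|_{H_\eta}^2)$ with Theorem~\ref{tm5.1}.
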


\begin{proof}
	By Theorem~\ref{tm5.1}, $A=I+Q$ is invertible. Therefore, its
	adjoint
	\begin{equation*}
		\begin{aligned}
			A^*=I+Q^*
		\end{aligned}
	\end{equation*}
	is also invertible.
	
	Let $H=L^2([0,1])$. The Cameron--Martin space $H_\eta$ of the
	Gaussian process $\eta$ consists of all functions of the form
	\begin{equation*}
		\begin{aligned}
			f_g(t)
			=
			\int_0^1
			A(\mathbf{1}_{[0,t]})(r)g(r)\,dr,
			\qquad g\in H.
		\end{aligned}
	\end{equation*}
	Equivalently,
	\begin{equation*}
		\begin{aligned}
			f_g(t)
			&=
			\left\langle
			A\mathbf{1}_{[0,t]},g
			\right\rangle_H
			\\
			&=
			\left\langle
			\mathbf{1}_{[0,t]},A^*g
			\right\rangle_H
			\\
			&=
			\int_0^t(A^*g)(s)\,ds.
		\end{aligned}
	\end{equation*}
	Since $A$ is invertible and the linear span of
	$\{\mathbf{1}_{[0,t]}:0\leq t\leq1\}$ is dense in $H$, the
	representation $f_g$ is unique. Consequently, the norm in
	$H_\eta$ is given by
	\begin{equation*}
		\begin{aligned}
			\|f_g\|_{H_\eta}
			=
			\|g\|_H.
		\end{aligned}
	\end{equation*}
	
	Set
	\begin{equation*}
		\begin{aligned}
			m(t)
			=
			\int_0^t h(s)\,ds.
		\end{aligned}
	\end{equation*}
	The condition $m=f_g$ is equivalent to
	\begin{equation*}
		\begin{aligned}
			h=m'=A^*g.
		\end{aligned}
	\end{equation*}
	Hence
	\begin{equation*}
		\begin{aligned}
			g
			=
			(A^*)^{-1}h
			=
			(I+Q^*)^{-1}h.
		\end{aligned}
	\end{equation*}
	Thus $m\in H_\eta$ for every $h\in L^2([0,1])$, and
	\begin{equation*}
		\begin{aligned}
			\|m\|_{H_\eta}^{2}
			&=
			\left\|(A^*)^{-1}h\right\|_{L^2([0,1])}^{2}
			\\
			&=
			\left\langle
			(A^*A)^{-1}h,h
			\right\rangle_{L^2([0,1])}.
		\end{aligned}
	\end{equation*}
	
	Let $\mu_\eta$ and $\mu_X$ denote the laws of $\eta$ and $X$,
	respectively, on $C([0,1])$. Since $m\in H_\eta$, the
	Cameron--Martin theorem implies that $\mu_X$ and $\mu_\eta$ are
	equivalent. More precisely,
	\begin{equation*}
		\begin{aligned}
			\frac{d\mu_X}{d\mu_\eta}(\omega)
			=
			\exp\left(
			\widehat{m}(\omega)
			-\frac{1}{2}\|m\|_{H_\eta}^{2}
			\right),
		\end{aligned}
	\end{equation*}
	where $\widehat{m}$ is the measurable linear functional
	associated with $m\in H_\eta$. Under the representation of
	$\eta$ in terms of the Brownian motion $w$, it satisfies
	\begin{equation*}
		\begin{aligned}
			\widehat{m}(\eta)
			&=
			\int_0^1g(r)\,dw(r)
			\\
			&=
			\int_0^1
			\left[(A^*)^{-1}h\right](r)\,dw(r).
		\end{aligned}
	\end{equation*}
	Notice that $\widehat{m}(\omega)$ is, in general, not a
	continuous linear functional on $C([0,1])$ equipped with the
	supremum norm.
	
	For $a\in C([0,1])$, define
	\begin{equation*}
		\begin{aligned}
			B(a,\varepsilon)
			=
			\left\{
			\omega\in C([0,1]):
			\|\omega-a\|_\infty<\varepsilon
			\right\}.
		\end{aligned}
	\end{equation*}
	Since $X=\eta+m$, we have
	\begin{equation*}
		\begin{aligned}
			P\left(\|X\|_\infty<\varepsilon\right)
			&=
			P\left(\|\eta+m\|_\infty<\varepsilon\right)
			\\
			&=
			\mu_\eta\bigl(B(-m,\varepsilon)\bigr).
		\end{aligned}
	\end{equation*}
	
	The shifted small-ball comparison formula for a centered Gaussian
	measure states that, for every $m\in H_\eta$,
	\begin{equation*}
		\begin{aligned}
			\lim_{\varepsilon\downarrow0}
			\frac{
				\mu_\eta(B(-m,\varepsilon))
			}{
				\mu_\eta(B(0,\varepsilon))
			}
			=
			\exp\left(
			-\frac{1}{2}\|m\|_{H_\eta}^{2}
			\right).
		\end{aligned}
	\end{equation*}
	Therefore,
	\begin{equation*}
		\begin{aligned}
			P\left(\|X\|_\infty<\varepsilon\right)
			\sim
			\exp\left(
			-\frac{1}{2}
			\left\|(I+Q^*)^{-1}h\right\|_{L^2([0,1])}^{2}
			\right)
			P\left(\|\eta\|_\infty<\varepsilon\right).
		\end{aligned}
	\end{equation*}
	
	Finally, Theorem~\ref{tm5.1} gives
	\begin{equation*}
		\begin{aligned}
			P\left(\|\eta\|_\infty<\varepsilon\right)
			\sim
			\frac{1}
			{\sqrt{\det(I+Q+Q^*+Q^*Q)}}
			\frac{4}{\pi}
			\exp\left(
			-\frac{\pi^2}{8\varepsilon^2}
			\right).
		\end{aligned}
	\end{equation*}
	Combining the last two asymptotic relations, we obtain
	\begin{equation*}
		\begin{aligned}
			P\left(\|X\|_\infty<\varepsilon\right)
			\sim
			\frac{1}
			{\sqrt{\det(I+Q+Q^*+Q^*Q)}}
			\frac{4}{\pi}
			\exp\left(
			-\frac{\pi^2}{8\varepsilon^2}
			-\frac{1}{2}
			\left\|(I+Q^*)^{-1}h\right\|_{L^2([0,1])}^{2}
			\right),
			\qquad \varepsilon\to0.
		\end{aligned}
	\end{equation*}
\end{proof}

For $x\in \mathcal{D}$ (where $\mathcal{D} \subset \mathbb{R}^{d}$ is a domain with smooth boundary) and $s\in[0,1]$, define the indicator random variable
\begin{equation*}
	W_{\eta,x,s} := 
	\begin{cases}
		1, & \text{if there exists } \tau\in[s,1] \text{ such that } x+\eta(\tau)-\eta(s)\in\partial \mathcal{D}, \\
		0, & \text{otherwise}.
	\end{cases}
\end{equation*}
The expectation $\mathbb{E}[W_{\eta,x,s}]$ is the probability that the integrator $\eta$ starting from $x$ at time $s$ hits the boundary $\partial \mathcal{D}$ by time $1$. The following theorem holds.

\begin{theorem}\label{tm5.3}
	Under the assumptions of Theorem~\ref{tm5.1}, consider the
	one-dimensional case and let $h\in L^2([0,1])$. Then
	\begin{equation*}
		\begin{aligned}
			1-\mathcal{T}
			(W^\varepsilon_{\eta,0,0})(h)
			\sim
			\frac{4}
			{\pi\sqrt{\det(I+Q+Q^*+Q^*Q)}}
			\exp\left(
			-\frac{1}{2}
			\int_0^1|h(s)|^2\,ds
			-\frac{\pi^2}{8\varepsilon^2}
			\right),
			\qquad \varepsilon\to0,
		\end{aligned}
	\end{equation*}
	where $W^\varepsilon_{\eta,0,0}$ corresponds to the exit event
	from the interval $\mathcal D_\varepsilon=(-\varepsilon,\varepsilon)$,
	and $\det$ denotes the Fredholm determinant on $L^2([0,1])$.
\end{theorem}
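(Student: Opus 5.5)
The plan is to imitate the proof of Theorem~\ref{tm2.2}, with Wiener measure replaced by the law $\mu_\eta$ of the integrator, and then to import the constant from Theorem~\ref{tm5.1}. The first step is to identify $1-\mathcal{T}(W^\varepsilon_{\eta,0,0})(h)$ with a tilted small-ball integral. Since $W^\varepsilon_{\eta,0,0}=1-\mathbf{1}_{\{\|\eta\|_\infty<\varepsilon\}}$ (up to the null event $\|\eta\|_\infty=\varepsilon$) and $\E\,\varepsilon(h)=1$, we get
\begin{equation*}
1-\mathcal{T}(W^\varepsilon_{\eta,0,0})(h)=\E\left[\mathbf{1}_{\{\|\eta\|_\infty<\varepsilon\}}\,\varepsilon(h)\right].
\end{equation*}
By Girsanov, with $\varepsilon(h)$ viewed as a density, this equals the probability that the process obtained from $\eta$ by shifting $w$ by $\int_0^\cdot h$ stays in the $\varepsilon$-ball. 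That shifted process is $\eta(t)+\int_0^t (A^*h)(s)\,ds$, since $\int_0^1 A(\mathbf{1}_{[0,t]})h\,dr=\int_0^t A^*h$.

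The second step rewrites this as $\mu_\eta(B(-m,\varepsilon))$ with $m(t)=\int_0^t (A^*h)(s)\,ds$, or with the opposite sign, which is irrelevant by symmetry of $\mu_\eta$. By the Cameron--Martin description of $H_\eta$ obtained in the preceding theorem, $m=f_g$ with $g=(A^*)^{-1}A^*h=h$, so $\|m\|_{H_\eta}^2=\int_0^1|h|^2\,ds$. This explains why the exponent involves $\|h\|^2$ and not $\|(I+Q^*)^{-1}h\|^2$. Equivalently, the preceding theorem applied with drift $A^*h$ in place of $h$ gives the result directly. The shifted small-ball comparison formula (the lemma quoted from \cite{Lif98}) then yields
\begin{equation*}
\frac{\mu_\eta(B(-m,\varepsilon))}{\mu_\eta(B(0,\varepsilon))}\to \exp\left\{-\tfrac12\int_0^1|h|^2ds\right\}.
\end{equation*}
Finally, Theorem~\ref{tm5.1} supplies the asymptotics
\begin{equation*}
\mu_\eta(B(0,\varepsilon))\sim \frac{4}{\pi}\,\frac{1}{\sqrt{\det(I+Q+Q^*+Q^*Q)}}\,e^{-\pi^2/8\varepsilon^2}.
\end{equation*}
Multiplying the two asymptotic relations gives the claim.

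The main obstacle is the first step: making the Girsanov identification rigorous and checking that open versus closed balls do not matter. For the latter, note that $P(\|\eta\|_\infty=\varepsilon)=0$, which follows from the equivalence $\mu_\eta\sim\mu_w$ established in the proof of Theorem~\ref{tm5.1} together with the corresponding fact for Brownian motion. For the former, $\varepsilon(h)$ is not a continuous functional of $\eta$ in the sup norm, so one cannot simply evaluate it at $\eta\approx 0$ as in the heuristic part of Theorem~\ref{tm2.2}. This is why I would route the argument through the Cameron--Martin shift formula on $H_\eta$, which handles measurable linear functionals, rather than through continuity.
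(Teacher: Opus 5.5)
Your proposal is correct and matches the paper's proof essentially step by step. The paper likewise uses the Cameron--Martin formula for the underlying isonormal process to obtain the exact identity $1-\mathcal{T}(W^\varepsilon_{\eta,0,0})(h)=P(\|\eta+u_h\|_\infty<\varepsilon)$ with $u_h(t)=\int_0^t(A^*h)(s)\,ds$, computes $\|u_h\|_{H_\eta}=\|h\|_{L^2}$ from injectivity of $g\mapsto\int_0^\cdot A^*g$, and then combines the shifted small-ball comparison with Theorem~\ref{tm5.1}. Your side remarks are consistent with this, with two small refinements: $1-W^\varepsilon_{\eta,0,0}=\mathbf 1_{\{\|\eta\|_\infty<\varepsilon\}}$ in fact holds exactly by path continuity, and the cross-check via the preceding theorem with drift $A^*h$ formally needs that theorem's extra trace-class hypothesis on $Q$, which your main route does not use.
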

\begin{proof}
	Let
	\begin{equation*}
		\begin{aligned}
			H=L^2([0,1])
		\end{aligned}
	\end{equation*}
	and set
	\begin{equation*}
		\begin{aligned}
			c=\max_{(t,s)\in[0,1]^2}|q(t,s)|<1.
		\end{aligned}
	\end{equation*}
	For every $f\in H$, the Cauchy--Schwarz inequality gives
	\begin{equation*}
		\begin{aligned}
			\|Qf\|_H^2
			&=
			\int_0^1
			\left|
			\int_0^1q(t,s)f(s)\,ds
			\right|^2dt
			\\
			&\leq
			\int_0^1\int_0^1|q(t,s)|^2\,ds\,dt
			\|f\|_H^2
			\\
			&\leq
			c^2\|f\|_H^2.
		\end{aligned}
	\end{equation*}
	Thus $\|Q\|\leq c<1$, and consequently
	\begin{equation*}
		\begin{aligned}
			A^{-1}
			=
			(I+Q)^{-1}
			=
			\sum_{n=0}^{\infty}(-Q)^n
		\end{aligned}
	\end{equation*}
	converges in the operator norm. Hence $A=I+Q$ and
	\begin{equation*}
		\begin{aligned}
			A^*=I+Q^*
		\end{aligned}
	\end{equation*}
	are boundedly invertible.
	
	Define the stochastic exponential
	\begin{equation*}
		\begin{aligned}
			\mathcal E_h(w)
			=
			\exp\left(
			\int_0^1h(s)\,dw(s)
			-\frac{1}{2}\|h\|_H^2
			\right).
		\end{aligned}
	\end{equation*}
	By the definition of the Fourier--Wiener transform,
	\begin{equation*}
		\begin{aligned}
			\mathcal T(F)(h)
			=
			\mathbb E\left[
			F\mathcal E_h(w)
			\right].
		\end{aligned}
	\end{equation*}
	Moreover,
	\begin{equation*}
		\begin{aligned}
			\mathbb E[\mathcal E_h(w)]
			=1.
		\end{aligned}
	\end{equation*}
	It follows that
	\begin{equation*}
		\begin{aligned}
			1-\mathcal T
			(W^\varepsilon_{\eta,0,0})(h)
			&=
			\mathbb E\left[
			\left(
			1-W^\varepsilon_{\eta,0,0}
			\right)
			\mathcal E_h(w)
			\right].
		\end{aligned}
	\end{equation*}
	
	Because $\eta(0)=0$ and $\eta$ has continuous sample paths, the
	event that $\eta$ does not hit
	\begin{equation*}
		\begin{aligned}
			\partial\mathcal D_\varepsilon
			=
			\{-\varepsilon,\varepsilon\}
		\end{aligned}
	\end{equation*}
	during $[0,1]$ is precisely the event that its entire path remains
	in $\mathcal D_\varepsilon$. Hence
	\begin{equation*}
		\begin{aligned}
			1-W^\varepsilon_{\eta,0,0}
			=
			\mathbf 1_{\{\|\eta\|_\infty<\varepsilon\}}.
		\end{aligned}
	\end{equation*}
	Therefore,
	\begin{equation}\label{FWsmallball}
		\begin{aligned}
			1-\mathcal T
			(W^\varepsilon_{\eta,0,0})(h)
			=
			\mathbb E\left[
			\mathbf 1_{\{\|\eta\|_\infty<\varepsilon\}}
			\mathcal E_h(w)
			\right].
		\end{aligned}
	\end{equation}
	
	We now apply the Cameron--Martin formula at the level of the
	underlying isonormal Gaussian process. Let $W$ be the isonormal
	Gaussian process over $H$, so that
	\begin{equation*}
		\begin{aligned}
			W(f)=\int_0^1f(s)\,dw(s),
			\qquad f\in H,
		\end{aligned}
	\end{equation*}
	and
	\begin{equation*}
		\begin{aligned}
			\eta(t)
			=
			W\left(
			A\mathbf 1_{[0,t]}
			\right).
		\end{aligned}
	\end{equation*}
	For the fixed element $h\in H$, define the shifted isonormal
	process
	\begin{equation*}
		\begin{aligned}
			W_h(f)
			=
			W(f)+\langle f,h\rangle_H,
			\qquad f\in H.
		\end{aligned}
	\end{equation*}
	The Cameron--Martin formula states that, for every bounded
	measurable functional $\Phi$ of $W$,
	\begin{equation}\label{isonormalCM}
		\begin{aligned}
			\mathbb E\left[
			\Phi(W)
			\exp\left(
			W(h)-\frac{1}{2}\|h\|_H^2
			\right)
			\right]
			=
			\mathbb E[\Phi(W_h)].
		\end{aligned}
	\end{equation}
	
	Under the shift from $W$ to $W_h$, for each $t\in[0,1]$,
	the random variable $\eta(t)$ is transformed as follows
	\begin{equation*}
		\begin{aligned}
			W_h\left(
			A\mathbf 1_{[0,t]}
			\right)
			&=
			W\left(
			A\mathbf 1_{[0,t]}
			\right)
			+
			\left\langle
			A\mathbf 1_{[0,t]},h
			\right\rangle_H
			\\
			&=
			\eta(t)+u_h(t),
		\end{aligned}
	\end{equation*}
	where
	\begin{equation}\label{definitionuh}
		\begin{aligned}
			u_h(t)
			&=
			\left\langle
			A\mathbf 1_{[0,t]},h
			\right\rangle_H
			\\
			&=
			\left\langle
			\mathbf 1_{[0,t]},A^*h
			\right\rangle_H
			\\
			&=
			\int_0^t(A^*h)(s)\,ds.
		\end{aligned}
	\end{equation}
	Taking
	\begin{equation*}
		\begin{aligned}
			\Phi(W)
			=
			\mathbf 1_{\{
				\sup_{0\leq t\leq1}
				|W(A\mathbf 1_{[0,t]})|
				<\varepsilon
				\}}
		\end{aligned}
	\end{equation*}
	in \eqref{isonormalCM}, we obtain the exact identity
	\begin{equation}\label{shiftedball}
		\begin{aligned}
			1-\mathcal T
			(W^\varepsilon_{\eta,0,0})(h)
			=
			P\left(
			\|\eta+u_h\|_\infty<\varepsilon
			\right).
		\end{aligned}
	\end{equation}
	
	It remains to compute the Cameron--Martin norm of $u_h$. Define
	the linear operator
	\begin{equation*}
		\begin{aligned}
			\mathcal K:H\longrightarrow C([0,1])
		\end{aligned}
	\end{equation*}
	by
	\begin{equation*}
		\begin{aligned}
			(\mathcal Kg)(t)
			&=
			\left\langle
			A\mathbf 1_{[0,t]},g
			\right\rangle_H
			\\
			&=
			\int_0^t(A^*g)(s)\,ds.
		\end{aligned}
	\end{equation*}
	The Cauchy--Schwarz inequality shows that
	\begin{equation*}
		\begin{aligned}
			|(\mathcal Kg)(t)-(\mathcal Kg)(s)|
			&=
			\left|
			\int_s^t(A^*g)(r)\,dr
			\right|
			\\
			&\leq
			\|A^*g\|_H|t-s|^{1/2},
		\end{aligned}
	\end{equation*}
	so $\mathcal Kg\in C([0,1])$.
	
	The Cameron--Martin space associated with the Gaussian process
	$\eta(t)=W(A\mathbf 1_{[0,t]})$ is the range of $\mathcal K$,
	equipped with the quotient norm induced by $H$. In the present
	case, $\mathcal K$ is injective. Indeed, if $\mathcal Kg=0$, then
	\begin{equation*}
		\begin{aligned}
			\int_0^t(A^*g)(s)\,ds=0
			\qquad\text{for every }t\in[0,1].
		\end{aligned}
	\end{equation*}
	Differentiating this absolutely continuous function gives
	\begin{equation*}
		\begin{aligned}
			A^*g=0
			\qquad\text{almost everywhere}.
		\end{aligned}
	\end{equation*}
	Since $A^*$ is invertible, it follows that $g=0$. Consequently,
	\begin{equation*}
		\begin{aligned}
			H_\eta
			&=
			\mathcal K(H)
			\\
			&=
			\left\{
			u_g:
			u_g(t)=\int_0^t(A^*g)(s)\,ds,\quad g\in H
			\right\},
		\end{aligned}
	\end{equation*}
	and its inner product is
	\begin{equation*}
		\begin{aligned}
			\langle u_{g_1},u_{g_2}\rangle_{H_\eta}
			=
			\langle g_1,g_2\rangle_H.
		\end{aligned}
	\end{equation*}
	In particular, the function $u_h$ in \eqref{definitionuh}
	corresponds to the element $h\in H$, and hence
	\begin{equation}\label{CMnormh}
		\begin{aligned}
			\|u_h\|_{H_\eta}^2
			=
			\|h\|_H^2
			=
			\int_0^1|h(s)|^2\,ds.
		\end{aligned}
	\end{equation}
	
	Let $\mu_\eta$ denote the law of $\eta$ on $C([0,1])$ and define
	\begin{equation*}
		\begin{aligned}
			B(a,\varepsilon)
			=
			\left\{
			\omega\in C([0,1]):
			\|\omega-a\|_\infty<\varepsilon
			\right\}.
		\end{aligned}
	\end{equation*}
	Then
	\begin{equation*}
		\begin{aligned}
			P\left(
			\|\eta+u_h\|_\infty<\varepsilon
			\right)
			=
			\mu_\eta\left(
			B(-u_h,\varepsilon)
			\right).
		\end{aligned}
	\end{equation*}
	The shifted small-ball comparison theorem for centered Gaussian
	measures states that, for every $a\in H_\eta$,
	\begin{equation*}
		\begin{aligned}
			\lim_{\varepsilon\downarrow0}
			\frac{
				\mu_\eta(B(a,\varepsilon))
			}{
				\mu_\eta(B(0,\varepsilon))
			}
			=
			\exp\left(
			-\frac{1}{2}\|a\|_{H_\eta}^2
			\right).
		\end{aligned}
	\end{equation*}
	Applying this theorem with $a=-u_h$ and using
	\eqref{CMnormh}, we obtain
	\begin{equation}\label{shiftedcomparison}
		\begin{aligned}
			P\left(
			\|\eta+u_h\|_\infty<\varepsilon
			\right)
			&\sim
			\exp\left(
			-\frac{1}{2}\|u_h\|_{H_\eta}^2
			\right)
			P\left(
			\|\eta\|_\infty<\varepsilon
			\right)
			\\
			&=
			\exp\left(
			-\frac{1}{2}
			\int_0^1|h(s)|^2\,ds
			\right)
			P\left(
			\|\eta\|_\infty<\varepsilon
			\right).
		\end{aligned}
	\end{equation}
	
	Finally, Theorem~\ref{tm5.1} gives
	\begin{equation}\label{centeredsmallball}
		\begin{aligned}
			P\left(
			\|\eta\|_\infty<\varepsilon
			\right)
			\sim
			\frac{4}
			{\pi\sqrt{\det(I+Q+Q^*+Q^*Q)}}
			\exp\left(
			-\frac{\pi^2}{8\varepsilon^2}
			\right).
		\end{aligned}
	\end{equation}
	Combining \eqref{shiftedball},
	\eqref{shiftedcomparison}, and
	\eqref{centeredsmallball}, we conclude that
	\begin{equation*}
		\begin{aligned}
			1-\mathcal T
			(W^\varepsilon_{\eta,0,0})(h)
			\sim
			\frac{4}
			{\pi\sqrt{\det(I+Q+Q^*+Q^*Q)}}
			\exp\left(
			-\frac{1}{2}
			\int_0^1|h(s)|^2\,ds
			-\frac{\pi^2}{8\varepsilon^2}
			\right),
			\qquad \varepsilon\to0.
		\end{aligned}
	\end{equation*}
\end{proof}

Based on the It\^{o}--Wiener expansion of
$W^{\varepsilon}_{\eta,0,0}$, we represent its Fourier--Wiener
transform via the series expansion
\begin{equation*}
	\begin{aligned}
		\mathcal{T}(W^{\varepsilon}_{\eta,0,0})(h)
		=
		\sum_{n=0}^{\infty}
		\mathcal{T}^{\varepsilon}_{\eta,n}(h).
	\end{aligned}
\end{equation*}
We can establish the following result.

\begin{theorem}
	Assume that the conditions of Theorem~\ref{tm5.3} hold. Then, for every positive integer $k$,
	\begin{equation*}
		\begin{aligned}
			\lim_{\varepsilon\to0}
			\mathcal{T}^{\varepsilon}_{\eta,2k}(h)
			e^{\pi^2/(8\varepsilon^2)}
			=
			\frac{4}
			{\pi\sqrt{\det(I+Q+Q^*+Q^*Q)}}
			\frac{(-1)^{k+1}}{2^k k!}
			\|h\|^{2k}.
		\end{aligned}
	\end{equation*}
	The zeroth-order coefficient satisfies
	\begin{equation*}
		\begin{aligned}
			\lim_{\varepsilon\to0}
			\left(
			1-\mathcal{T}^{\varepsilon}_{\eta,0}
			\right)
			e^{\pi^2/(8\varepsilon^2)}
			=
			\frac{4}
			{\pi\sqrt{\det(I+Q+Q^*+Q^*Q)}}.
		\end{aligned}
	\end{equation*}
	Moreover, for every nonnegative integer $k$,
	\begin{equation*}
		\begin{aligned}
			\mathcal{T}^{\varepsilon}_{\eta,2k+1}(h)
			=
			0.
		\end{aligned}
	\end{equation*}
\end{theorem}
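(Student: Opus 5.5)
The plan is to copy the proof of Theorem~\ref{tm2.3}, with the Brownian small-ball probability replaced by that of the integrator from Theorem~\ref{tm5.1}. Write $I_n=\int_0^1\cdots\int_0^1 h(t_1)\cdots h(t_n)\,dw(t_1)\cdots dw(t_n)$. As in the proof of Theorem~\ref{tm2.3}, the generating-function identity for the stochastic exponential $\mathcal E_h(w)$ gives \eqref{Hermit1}, that is, $I_n=\|h\|^nH_n(Z)$ with $Z=\int_0^1h\,dw/\|h\|$ standard normal. Hence
\begin{equation*}
\mathcal T^{\varepsilon}_{\eta,n}(h)=\frac{1}{n!}\E\bigl[W^\varepsilon_{\eta,0,0}I_n\bigr].
\end{equation*}
For $n\ge1$ we have $\E[H_n(Z)]=0$, so
\begin{equation*}
\mathcal T^{\varepsilon}_{\eta,n}(h)=-\frac{\|h\|^n}{n!}\E\bigl[\mathbf 1_{\{\|\eta\|_\infty<\varepsilon\}}H_n(Z)\bigr],
\end{equation*}
where we use $1-W^\varepsilon_{\eta,0,0}=\mathbf 1_{\{\|\eta\|_\infty<\varepsilon\}}$, as established in the proof of Theorem~\ref{tm5.3}. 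The zeroth-order term is $\mathcal T^\varepsilon_{\eta,0}=\E[W^\varepsilon_{\eta,0,0}]$. Therefore $1-\mathcal T^\varepsilon_{\eta,0}=P(\|\eta\|_\infty<\varepsilon)$, and the second claim is exactly Theorem~\ref{tm5.1}.

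\textbf{Odd orders.} The map $w\mapsto -w$ sends $\eta\mapsto-\eta$, since $\eta$ is linear in $w$. It therefore preserves the event $\{\|\eta\|_\infty<\varepsilon\}$ and sends $Z\mapsto -Z$. Because $H_{2k+1}$ is odd, the expectation vanishes, giving $\mathcal T^{\varepsilon}_{\eta,2k+1}(h)=0$ exactly.

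\textbf{Even orders.} On $\{\|\eta\|_\infty<\varepsilon\}$ we need $|Z|$ to be small, and this is the main obstacle. In Theorem~\ref{tm2.3} the bound came from integration by parts in $w$, but now only $\eta$ is controlled. I would invert the relation between $w$ and $\eta$. By the representation in the proof of Theorem~\ref{tm5.1}, $\eta=w+\int_0^\cdot\xi(s)\,ds$ with $\xi(s)=\int_0^1q(r,s)\,dw(r)$. Equivalently, in terms of the isonormal process, $W(A f)=\int f\,d\eta$. This gives
\begin{equation*}
\int_0^1 h\,dw=W(h)=\int_0^1 (A^{-1}h)\,d\eta .
\end{equation*}
First take $h$ smooth and $q$ smooth enough that $g:=A^{-1}h$ is $C^1$. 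Then $\int g\,d\eta=g(1)\eta(1)-\int_0^1 g'\eta\,dt$ is bounded by $C\varepsilon$ on the event, so $H_{2k}(Z)=(-1)^k(2k-1)!!+O(\varepsilon^2)$ uniformly there. Consequently
\begin{equation*}
\E\bigl[\mathbf 1_{\{\|\eta\|_\infty<\varepsilon\}}H_{2k}(Z)\bigr]\sim(-1)^k(2k-1)!!\,P(\|\eta\|_\infty<\varepsilon).
\end{equation*}
Using $(2k-1)!!/(2k)!=1/(2^kk!)$ and Theorem~\ref{tm5.1}, multiplying by $e^{\pi^2/(8\varepsilon^2)}$ gives the stated limit.

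For general $h\in L^2$, or merely continuous $q$, I would fall back on a conditional argument. Decompose $Z=\alpha\,\int g_0\,d\eta+\zeta$, where $g_0$ is a smooth approximation of $A^{-1}h/\|h\|$ and $\zeta$ has small variance $\delta$. Then bound $\E[\mathbf 1_{\{\|\eta\|_\infty<\varepsilon\}}|H_{2k}(Z)-H_{2k}(0)|]$ using the Gaussian correlation inequality, or Anderson's inequality applied to the pair $(\eta,\zeta)$. The aim is to show that conditionally on the small ball, $\zeta$ has second moment at most of order $\delta$. Letting $\varepsilon\to0$ and then $\delta\to0$ would finish the proof. This approximation step, which controls $Z$ on the small ball without a pathwise bound, is where I expect the real work to lie.
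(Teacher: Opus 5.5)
Your argument is correct. The zeroth-order and odd-order parts coincide with the paper's: take $h=0$, respectively use invariance under $w\mapsto-w$. For the even orders you take a genuinely different route.

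The paper never touches individual Hermite coefficients. It applies Theorem~\ref{tm5.3} to $zh$, namely $1-\mathcal T(W^\varepsilon_{\eta,0,0})(zh)=Ke^{-z^2\|h\|^2/2}e^{-\pi^2/(8\varepsilon^2)}+o(e^{-\pi^2/(8\varepsilon^2)})$. It combines this with the homogeneity $\mathcal T^\varepsilon_{\eta,n}(zh)=z^n\mathcal T^\varepsilon_{\eta,n}(h)$ and reads off the coefficient of $z^{2k}$ from the Taylor series of $1-e^{-z^2\|h\|^2/2}$. This is short, but it needs Theorem~\ref{tm5.3} for complex $z$, with a remainder locally uniform in $z$. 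The real Cameron--Martin proof of Theorem~\ref{tm5.3} does not give that by itself. It can be repaired by a Vitali-type argument, because $|e^{\pi^2/(8\varepsilon^2)}(1-\mathcal T(W^\varepsilon_{\eta,0,0})(zh))|$ is at most $e^{(\mathrm{Im}\,z)^2\|h\|^2/2}$ times its value at $\mathrm{Re}\,z$.

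You instead redo the proof of Theorem~\ref{tm2.3} term by term and need only Theorem~\ref{tm5.1}. The price is a lemma: $Z=\|h\|^{-1}\int_0^1(A^{-1}h)\,d\eta$, which is generally not a continuous functional of the path, is small conditionally on $\{\|\eta\|_\infty<\varepsilon\}$. Your Anderson argument does supply this. For a symmetric convex set and a symmetric slab it is \v{S}id\'ak's inequality, so Royen's theorem is not needed. It gives $P(|\zeta|>t\mid\|\eta\|_\infty<\varepsilon)\le P(|\zeta|>t)$ uniformly in $\varepsilon$.

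Use this full stochastic domination rather than only the second moment. The bound $|H_{2k}(Z)-H_{2k}(0)|\le C(Z^2+Z^{2k})$ requires conditional moments up to order $2k$. Once the approximation step is in place, your preliminary case with smooth $h$ and smooth $q$ is unnecessary.

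Your route is more elementary and self-contained. It makes explicit the mechanism the paper's appeal to Theorem~\ref{tm5.3} uses implicitly: the conditional vanishing of $\int_0^1 h\,dw$ on the small ball.
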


\begin{proof}
	Define
	\begin{equation*}
		\begin{aligned}
			K
			=
			\frac{4}
			{\pi\sqrt{\det(I+Q+Q^*+Q^*Q)}}
		\end{aligned}
	\end{equation*}
	and
	\begin{equation*}
		\begin{aligned}
			a_{\varepsilon}
			=
			\frac{\pi^2}{8\varepsilon^2}.
		\end{aligned}
	\end{equation*}
	By Theorem~\ref{tm5.3}, for every fixed $h\in L^2([0,1])$,
	\begin{equation*}
		\begin{aligned}
			\mathcal{T}
			(W^{\varepsilon}_{\eta,0,0})(h)
			=
			1
			-
			K
			e^{-\frac{1}{2}\|h\|^2}
			e^{-a_{\varepsilon}}
			+
			o\left(e^{-a_{\varepsilon}}\right).
		\end{aligned}
	\end{equation*}
	In particular, by taking $h=0$, we obtain
	\begin{equation*}
		\begin{aligned}
			\mathcal{T}^{\varepsilon}_{\eta,0}
			=
			\mathcal{T}
			(W^{\varepsilon}_{\eta,0,0})(0)
			=
			1
			-
			K e^{-a_{\varepsilon}}
			+
			o\left(e^{-a_{\varepsilon}}\right).
		\end{aligned}
	\end{equation*}
	Consequently,
	\begin{equation*}
		\begin{aligned}
			\lim_{\varepsilon\to0}
			\left(
			1-\mathcal{T}^{\varepsilon}_{\eta,0}
			\right)
			e^{a_{\varepsilon}}
			=
			K.
		\end{aligned}
	\end{equation*}
	
	For $z$ in a complex neighborhood of zero, the homogeneity of
	the Fourier--Wiener transform of the $n$-th Wiener chaos gives
	\begin{equation*}
		\begin{aligned}
			\mathcal{T}^{\varepsilon}_{\eta,n}(zh)
			=
			z^n
			\mathcal{T}^{\varepsilon}_{\eta,n}(h).
		\end{aligned}
	\end{equation*}
	Hence
	\begin{equation*}
		\begin{aligned}
			\mathcal{T}
			(W^{\varepsilon}_{\eta,0,0})(zh)
			=
			\sum_{n=0}^{\infty}
			z^n
			\mathcal{T}^{\varepsilon}_{\eta,n}(h).
		\end{aligned}
	\end{equation*}
	Applying Theorem~\ref{tm5.3} with $zh$ in place of $h$ yields
	\begin{equation*}
		\begin{aligned}
			\mathcal{T}
			(W^{\varepsilon}_{\eta,0,0})(zh)
			=
			1
			-
			K
			e^{-\frac{1}{2}z^2\|h\|^2}
			e^{-a_{\varepsilon}}
			+
			o\left(e^{-a_{\varepsilon}}\right),
		\end{aligned}
	\end{equation*}
	where the remainder is locally uniform with respect to $z$.
	
	Subtracting the zeroth-order term, we obtain
	\begin{equation}\label{Taylor1}
		\begin{aligned}
			\sum_{n=1}^{\infty}
			z^n
			\mathcal{T}^{\varepsilon}_{\eta,n}(h)
			&=
			K
			\left(
			1-
			e^{-\frac{1}{2}z^2\|h\|^2}
			\right)
			e^{-a_{\varepsilon}}
			+
			o\left(e^{-a_{\varepsilon}}\right)
			\\
			&=
			K
			\sum_{m=1}^{\infty}
			\frac{(-1)^{m+1}}
			{2^m m!}
			z^{2m}\|h\|^{2m}
			e^{-a_{\varepsilon}}
			+
			o\left(e^{-a_{\varepsilon}}\right).
		\end{aligned}
	\end{equation}
	The local uniformity allows the coefficients of the analytic
	power series in \eqref{Taylor1} to be compared. Therefore, for
	every positive integer $k$,
	\begin{equation*}
		\begin{aligned}
			\lim_{\varepsilon\to0}
			\mathcal{T}^{\varepsilon}_{\eta,2k}(h)
			e^{a_{\varepsilon}}
			=
			K
			\frac{(-1)^{k+1}}
			{2^k k!}
			\|h\|^{2k}.
		\end{aligned}
	\end{equation*}
	
	Finally, the functional $W^{\varepsilon}_{\eta,0,0}$ is invariant
	under the transformation $w\mapsto-w$. Indeed, $\eta$ is linear
	in $w$, and the defining small-ball event is invariant under
	$\eta\mapsto-\eta$. On the other hand, the $n$-th Wiener chaos
	changes by the factor $(-1)^n$ under $w\mapsto-w$. It follows
	that all odd-order chaos projections vanish. Hence, for every
	nonnegative integer $k$,
	\begin{equation*}
		\begin{aligned}
			\mathcal{T}^{\varepsilon}_{\eta,2k+1}(h)
			=
			0.
		\end{aligned}
	\end{equation*}
\end{proof}

\end{document}